\newtheorem{theorem}{Theorem}
\newtheorem{corollary}[theorem]{Corollary}
\newtheorem{condition}[theorem]{Condition}
\newtheorem{prop}[theorem]{Proposition}
\newtheorem{lem}[theorem]{Lemma}
\theoremstyle{definition}
\newtheorem{definition}{Definition}
\newtheorem{ex}{Example}
\theoremstyle{remark}
\newtheorem*{remark*}{Remark}
\DeclareMathOperator{\Perm}{Perm}
\DeclareMathOperator{\Stab}{Stab}
\newcommand{\nat}{\mathbb N}
\newcommand{\rat}{\mathbb Q}
\newcommand{\simc}{\sim_{\mathbf c}}
\newcommand{\simb}{\sim_{\mathbf b}}
\newcommand{\simh}{\sim_{\mathbf h}}
\newcommand{\Inc}{\mbox{\rm Inc}}
\newcommand{\MMF}{M. Mend\`es France}
\newcommand{\SB}{S. Brlek}
\newcommand{\JPL}{J.-P. Labb\'e}
\begin{document}


\title{Combinatorial variations on Cantor's diagonal}
 \author[S. Brlek]{Sre\v{c}ko Brlek}
\address{\SB, LaCIM, Universit\'e du Qu\'ebec \`a Montr\'eal, C.P. 8888, Succ. Centre-ville,
   Montr{\'e}al (QC) Canada H3C 3P8}
 \email{brlek.srecko@uqam.ca}
 \author[J.-P. Labb\'e]{Jean-Philippe Labb\'e}
\address{\JPL, Freie Universit\"at Berlin,
    Arnimallee 2, 14195 Berlin, Deutschland}
    \email{labbe@math.fu-berlin.de}
     \author[M.Mend\`es France]{Michel Mend\`es France}
 \address{\MMF, D\'epartement de 
   math\'ematiques, UMR 5251, Universit\'e Bordeaux 1, 351 cours de la Lib\'eration, F-33405 Talence  cedex, France }
  \email{mmf@math.u-bordeaux1.fr}


\begin{abstract}

We discuss counting problems linked to finite versions of Cantor's diagonal of infinite tableaux. We extend previous results  of \cite{BMRR} by refining an equivalence relation that reduces significantly the exhaustive generation. New enumerative results follow and allow to look at the sub-class of the so-called {\em bi-Cantorian tableaux}. We conclude with a correspondence between Cantorian-type tableaux and coloring of hypergraphs having a square number of vertices.


\end{abstract}

\maketitle


\section{Introduction}\label{sec:Intro}

In a celebrated paper, Cantor  \cite{cantor} proved the existence of transcendental numbers using his famous diagonal argument, based on the comparison of the set of rows of an infinite tableau with its diagonal.
It amounts to fill a countable infinite tableau with a list of algebraic numbers in  base 2, and to compare with a word built such that  for each index $i$, its $i$th  digit is different from the diagonal's $i$th digit. Indeed such a word does not appear on any row of the infinite tableau. 
Since no assumption is made on the ordering, any permutation of the rows yields the same conclusion. In fact, it can be shown that the diagonal itself does not appear in any row of the tableau, provided that each rational number appears twice, once with trailing $0$'s and once with trailing $1$'s as shown  in Brlek, Mend\`es France, Robson and Rubey \cite{BMRR}. They proceed as follows. On a finite alphabet $A=\{\alpha_1, \alpha_2,\dots,\alpha_s\}$, the \emph{permanent}
of an infinite tableau $T:\nat\times\nat\to A$ is the set of infinite sequences
\begin{equation*}
  \Perm(T)=\bigcup_{\pi\in S_{\nat}}
  a_{\pi(1)}^1a_{\pi(2)}^2a_{\pi(3)}^3\cdots,
\end{equation*}
where $S_{\nat}$ is the family of all bijections $\pi: \nat\rightarrow \nat$.
Therefore,  if the set $L$ of row-words of $T$ consists of all  algebraic numbers in the unit interval represented in base~$2$, then $\Perm (T)$ is exactly the set of all  transcendental numbers in the unit interval (\cite{BMRR} Corollary 12), and $\Perm (T) \cap L=\emptyset$. Such a tableau is called \emph{Cantorian} and has a natural finite counterpart:
the \emph{permanent} of a square~$n\times n$~tableau $T=(a_{i}^j)$ with $a_i^j\in A$ is the  set of words
\begin{equation}\label{Perm}
\Perm(T)  =  \bigcup_{\pi \in S_n} \left\{a_{\pi(1)}^1a_{\pi(2)}^2\cdots a_{\pi(n)}^n\right\},
\end{equation}
where~$S_n$ is the set of all permutations on $n$ elements. Thus, the permanent of a tableau~$T$ is nothing but the set of distinct diagonal-words obtained by permuting the rows of~$T$. Then,  a tableau~$T$ is \emph{Cantorian} if~$\Perm(T)\cap L=\emptyset$, where~$L$ is the set of distinct row-words of~$T$. This means that no row-words of~$T$ can appear as a diagonal of~$T$ by permuting its rows.  

In the first part of their paper, they gave a sufficient condition for a tableau to be Cantorian, and they also designed a polynomial time algorithm to test whether a tableau is Cantorian or not. Some  enumerative results for tableaux of small size were also provided, especially for tableaux of size smaller than $11$ on two letter alphabets, which reveals the computation bottleneck of the problem. In an attempt to ease the computation, they also introduced a natural equivalence relation on Cantorian tableaux at the end of their paper. 

In this paper, we study this equivalence relation in more detail and its consequences on the classification of Cantorian tableaux, their enumeration and the study of bi-Cantorian tableaux.  In Section  \ref{sec:Prelim}, we introduce  the combinatorial objects necessary for our purpose with  examples. Section  \ref{sec:EC} contains a class invariant for the equivalence relation and, using group actions, we get a formula for the cardinality of a class. Section  \ref{sec:Generation} describes the canonical representatives that are useful for computations, with new enumerative results obtained exhaustively, and closed formulas for Cantorian tableaux as well. Section  \ref{sec:BiC} is devoted to the investigation on bi-Cantorian tableaux. In Section  \ref{sec:Variations}, we study columns and extensions of infinite Cantorian-type tableaux related to number theory. Finally, in Section \ref{sec:HyperGraphs}, we describe a correspondence between Cantorian-type tableaux and colored hypergraphs.

\section{Preliminaries}\label{sec:Prelim}
We consider a finite and ordered alphabet  $A=\{\alpha_1, \alpha_2,\dots,\alpha_s\}$ where $s\geq 2$ and $\alpha_i<\alpha_j$ whenever $i<j$. Then ~$A^\star$ is the set of finite words over~$A$. The lexicographic order on~$A^\star$ is denoted  $\leq$.  The number of occurrences of the letter~$\alpha\in A$ in $w\in A^\star$ is denoted $|w|_\alpha$. We write~$\mathcal{T}^s_n$ for the set of all square~$n\times n$ tableaux $T$ with entries in~$A$. Recall that given a tableau $T$, $L$ is the set of row-words of $T$. The sequence of column-words of~$T$ is denoted by~$\overline{C}=(c_1,c_2,\dots,c_n)$ while the set of distinct column-words is denoted by~$C$. Then each of the sets~$L$ and~$C$ clearly contain at most~$n$ words.

It is convenient to build  the set~$\mathbb{N}^\star$ of sequences (or words!) of  natural numbers~$\mathbb{N}=\{0,1,2,3,\dots\}$ considered as an infinite alphabet. Elements~$\lambda$ of~$\mathbb{N}^\star$ are called  \emph{compositions} (the definition of composition is loosened by allowing null parts). It can be viewed as a function 
$\lambda : [0..(m-1)] \rightarrow \nat$, with $m\in \nat\setminus \{0\}$. The \emph{weight}~$|\lambda|\in\nat$ of a composition~$\lambda$ is the sum of the numbers appearing in it. The \emph{length}  is ~$\ell(\lambda)=m$. If~$\lambda$ is a composition of weight~$n$, we say that~$\lambda$ is a \emph{partition} of weight~$n$ if it is decreasing (not necessarily strictly). 
We denote by~$\mathcal{C}_n$ (respectively~$\mathcal{P}_n$) the set of compositions (respectively partitions) of weight~$n$. There is a natural projection~$\pi_n$ from~$\mathcal{C}_n$ to~$\mathcal{P}_n$ defined by ordering the compositions in decreasing order.

By abuse of notation, we denote the lexicographic order on~$\mathbb{N}^\star$ deduced from the natural order on~$\nat$ also by~$\leq$. We define a particular total order~$\preceq$ on~$\mathbb{N}^\star$,  referred to as the \emph{composition order}.

\begin{definition}
Let~$\lambda,\lambda' \in\mathbb{N}^\star$. We write~$\lambda \preceq \lambda'$ if and only if
\begin{displaymath}
 \ell(\lambda)<\ell(\lambda') \text{ or } \left( \ell(\lambda)=\ell(\lambda') \text{ and } \lambda'\leq\lambda\right).
\end{displaymath}
\end{definition}
Note the inversion in the condition $(\lambda'\leq\lambda)$, which says that the inverse lexicographic order is used. 
To illustrate the composition order, consider the set of \emph{positive} compositions (without $0$) of length at most~$3$ and weight~$5$. It is totally ordered by~$\preceq$ as follows
\[
 5 \preceq 41 \preceq 32 \preceq 23 \preceq 14 \preceq 311 \preceq 221 \preceq 212 \preceq 131 \preceq 122 \preceq 113.
\]
For computational purposes, the Cantorian representatives, defined later on, are built from partitions without zeros, therefore having different lengths; this justifies the condition $\ell(\lambda)<\ell(\lambda')$ in the definition of $\preceq$.

It is useful to encode words $A^\star$ by using compositions\footnote{This is an adaptation of the \emph{Parikh map}  (see Supplementary Lecture H of \cite{kozen}) sending a word to a composition of length~$s$ in place of a vector.} of length $s$.
\begin{definition}
Let~$w\in A^\star$ such that $|w|=n$. The \emph{Parikh composition}~$\mathfrak{p}_w:=\mathfrak{p}(w)$ of~$w$ is a composition of weight~$n$ and length~$s$ defined by the map $\mathfrak{p}: A^\star  \rightarrow  \mathbb{N}^\star $ 
\begin{equation*}
w  \mapsto  |w|_{\alpha_1}|w|_{\alpha_2}\cdots |w|_{\alpha_s}.
\end{equation*}
\end{definition}

\begin{ex}
 Let~$A=\{1,2,3\}$ and~$w=12323$,~$v=2233$ be words in~$A^\star$, then their Parikh compositions are~$\mathfrak{p}_{w}=122$ and~$\mathfrak{p}_{v}=022$.
\end{ex}

We use the composition order to define a total order~$\blacktriangleleft$ on~$A^\star$, called the \emph{Parikh composition order} on~$A^\star$.

\begin{definition}
 Let~$w,w' \in A^\star$. We write~$w \blacktriangleleft w'~$ if and only if
\begin{align*}
 \mathfrak{p}_{w}\prec\mathfrak{p}_{w'} \text{ or } \left( \mathfrak{p}_{w}=\mathfrak{p}_{w'} \text{ and } w\leq w'\right).
\end{align*}
\end{definition}

\begin{ex}
 Let~$A=\{1,2,3\}$. If~$w=12323,v=21233,u=32121$ are words in~$A^\star$, then~$\mathfrak{p}_{w}=122,\mathfrak{p}_{v}=122,\mathfrak{p}_{u}=221$ and~$u\blacktriangleleft w \blacktriangleleft v$.
\end{ex}

The map~$\mathfrak{p}$ is naturally extended to  a map~$\mathfrak{P}: \mathcal{T}^s_n \rightarrow (\mathbb{N}^\star)^n$. Then, the order~$\blacktriangleleft$ is extended to the set~$\mathcal{T}^s_n$ by looking at the image ~$\mathfrak{P}(T)$ from left to right.

\begin{definition}
The \emph{Parikh compositions} $\mathfrak{P}_{T}:=\mathfrak{P}(T)=(\mathfrak{p}_{c_1},\mathfrak{p}_{c_2},\dots,\mathfrak{p}_{c_n})$ of a tableau~$T$ is the vector of Parikh compositions of its column-words in~$\overline{C}$.
\end{definition}

\begin{ex} \label{ex:parikh_comp}
 Here are a few tableaux on the alphabet~$\{1,2,3\}$ with their Parikh compositions.
\[{\footnotesize
\begin{array}{rl}
T_1 =&\begin{bmatrix}
                          1 & 1 & 3 \\
                          1 & 1 & 2 \\
                          2 & 3 & 1
              \end{bmatrix},\medskip\\ 
\mathfrak{P}_{T_1} =&(210,201,111)
\end{array}
\begin{array}{rl}
T_2 = & \begin{bmatrix}
                          2 & 1 & 1 & 2 \\
                          3 & 1 & 2 & 1 \\
                          2 & 1 & 1 & 1 \\
                          2 & 1 & 2 & 1 \\
                         \end{bmatrix}, \medskip\\
\mathfrak{P}_{T_2} =&(031,400,220,310)
\end{array}
\begin{array}{rl}
T_3 =&  \begin{bmatrix}
                          1 & 2 & 3 & 1 & 2 & 2 \\
                          2 & 1 & 1 & 2 & 3 & 3 \\
                          3 & 1 & 2 & 1 & 1 & 2 \\
                          2 & 3 & 2 & 1 & 1 & 1 \\
                          3 & 3 & 2 & 1 & 2 & 1 \\
                          1 & 1 & 1 & 1 & 3 & 2 
                \end{bmatrix}.\medskip\\ 
\mathfrak{P}_{T_3} =&(222,312,231,510,222,231)
\end{array} 
}\]

\end{ex}

\section{Structure of equivalence classes} \label{sec:EC}

In  \cite{BMRR}, the authors introduced the following  equivalence relation on~$\mathcal{T}^s_n$.

\begin{definition}[Brlek et al.\cite{BMRR}]
Let~$T',T\in\mathcal{T}^s_n$. One writes ~$T'\simc T$  if and only if~$T'$ can be obtained from~$T$ by a finite combination of the following operations: permutation of rows, permutation of columns, replacing all entries of a column by their image under any bijection of the alphabet.
\end{definition}

\begin{ex}
The following tableaux are all equivalent and ordered in decreasing order according to~$\blacktriangleleft$. First, by applying the bijection~$1\rightarrow 2, 2\rightarrow 1, 3\rightarrow 3$ to the first column and the bijection~$1\rightarrow 2, 2\rightarrow 3, 3\rightarrow 1$ to the second column. Secondly, by swapping the second and third column. Next, by swapping the second and third row. Finally, by applying the bijection~$1\rightarrow 1, 2\rightarrow 3, 3\rightarrow 2$ to the third column.
\[{\footnotesize
          \begin{bmatrix}
                          2 & 3 & 1\\
                          2 & 2 & 2\\
                          2 & 3 & 1\\
                         \end{bmatrix} \blacktriangleright
         \begin{bmatrix}
                          1 & 1 & 1\\
                          1 & 3 & 2\\
                          1 & 1 & 1\\
                         \end{bmatrix} \blacktriangleright
          \begin{bmatrix}
                          1 & 1 & 1\\
                          1 & 2 & 3\\
                          1 & 1 & 1\\
                         \end{bmatrix} \blacktriangleright
        \begin{bmatrix}
                          1 & 1 & 1\\
                          1 & 1 & 1\\
                          1 & 2 & 3\\
                         \end{bmatrix} \blacktriangleright 
        \begin{bmatrix}
                          1 & 1 & 1\\
                          1 & 1 & 1\\
                          1 & 2 & 2\\
                         \end{bmatrix}
                         }
\]
\end{ex}

Given a tableau~$T$, we denote its equivalence class by~$[T]$ and its class cardinality by~$\#[T]$; it is clear that~$\#[T]\leq n!^2(s!)^n$. By inspection  it is often much less, but it is still a sharp bound, as shown in the next example.

\begin{ex}
Consider the three tableaux of Example \ref{ex:parikh_comp}. We have
\begin{align*}
 \#[T_1]=1944~ & \leq  ~7776=(3!)^2(3!)^3; \\
 \#[T_2]=24186470400~ & = ~(6!)^2(3!)^6; \\
 \#[T_3]=186624~ & \leq  ~746496=4!^2(3!)^4. 
\end{align*}
\end{ex}
Given any two tableaux~$T$ and~$T'$, deciding whether~$T$ is equivalent or not to~$T'$ can be a tedious task if we inspect all combinations  of permutations and bijections. To accelerate this process, we introduce  a class invariant of tableaux. Recall that~$\pi_n$ is the projection from the compositions of weight~$n$ to the partitions of weight~$n$. Let~$\pi^n_n$ be the Cartesian product~$\underbrace{\pi_n\times\pi_n\times\cdots\times\pi_n}_{n\text{ times}}$ and~$\Inc_{\preceq}:(\mathcal{C}_n)^n \longrightarrow (\mathcal{C}_n)^n$ the function that reorders vectors in increasing order according to the order~$\preceq$.

\begin{lem}
The function~$\Inc_\preceq \circ \pi_n^n \circ \mathfrak{P}$ is a class function with respect to the relation~$\simc$ over~$\mathcal{T}^s_n$.
\end{lem}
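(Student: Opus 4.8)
The plan is to exploit the fact that $\simc$ is, by definition, the equivalence relation generated by three elementary operations: row permutations, column permutations, and recoloring a single column by a bijection of $A$. Since a class function need only be constant on each class, and a composite of these operations links any two equivalent tableaux, it suffices to check that the value of $F := \Inc_\preceq \circ \pi_n^n \circ \mathfrak{P}$ is unchanged by each single operation applied to an arbitrary $T \in \mathcal{T}^s_n$. The pleasant feature I would highlight at the outset is that the three layers of $F$ are matched to the three generators: $\mathfrak{P}$ already absorbs row permutations, the componentwise projection $\pi_n^n$ absorbs column recolorings, and the final sort $\Inc_\preceq$ absorbs column permutations.

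First I would treat row permutations. Permuting the rows of $T$ by some $\rho \in S_n$ leaves each column-word $c_j$ with exactly the same multiset of letters; only their vertical order changes. Since the Parikh composition $\mathfrak{p}_{c_j} = |c_j|_{\alpha_1}\cdots|c_j|_{\alpha_s}$ records only occurrence counts, each $\mathfrak{p}_{c_j}$ is literally unchanged, hence $\mathfrak{P}(T)$ itself is fixed and a fortiori $F(T)$ is. This is the one generator neutralized before any sorting even occurs.

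Next I would treat a recoloring of column $j$ by a bijection $\sigma : A \to A$. Here the key identity is $|\sigma(c_j)|_{\alpha_i} = |c_j|_{\sigma^{-1}(\alpha_i)}$, so the new Parikh composition is obtained from $\mathfrak{p}_{c_j}$ by permuting its $s$ parts according to $\sigma$, while all other components of $\mathfrak{P}(T)$ are untouched. Applying $\pi_n$ to this $j$-th component sorts its parts into decreasing order and therefore erases the permutation, giving $\pi_n(\mathfrak{p}_{\sigma(c_j)}) = \pi_n(\mathfrak{p}_{c_j})$; thus $\pi_n^n \circ \mathfrak{P}$ is already invariant under recolorings, and $F$ follows. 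Finally, permuting the columns of $T$ by some $\kappa \in S_n$ permutes the entries of the vector $\mathfrak{P}(T)$ by $\kappa$, a permutation that commutes with the componentwise map $\pi_n^n$; the outermost reordering $\Inc_\preceq$ then sends the resulting tuple to the unique $\preceq$-increasing sequence of its components, which does not depend on the input order. Hence $F$ is invariant under column permutations as well.

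The only step requiring genuine care is the recoloring computation, since it is where the Parikh map interacts nontrivially with the alphabet bijection; the identity $|\sigma(w)|_{\alpha_i} = |w|_{\sigma^{-1}(\alpha_i)}$ must be stated cleanly so that the claim ``permutation of parts is killed by $\pi_n$'' is rigorous rather than merely plausible. The remaining two cases amount to bookkeeping about multisets and about sorting being order-blind. Having verified invariance under each of the three generators, I would conclude that $F$ is constant on every $\simc$-class, i.e. a class function.
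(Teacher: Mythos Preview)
Your proof is correct and follows essentially the same approach as the paper: both verify invariance of $\Inc_\preceq \circ \pi_n^n \circ \mathfrak{P}$ under each of the three generating operations (row permutations leave $\mathfrak{P}$ fixed, column recolorings are absorbed by $\pi_n^n$, column permutations are absorbed by $\Inc_\preceq$), and then conclude by composition. Your write-up is slightly more explicit than the paper's in two places---the identity $|\sigma(w)|_{\alpha_i} = |w|_{\sigma^{-1}(\alpha_i)}$ for the recoloring step, and the observation that column permutation commutes with the componentwise map $\pi_n^n$---but these are refinements of presentation, not a different argument.
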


\begin{proof}
 Take a tableau~$T$ and consider its equivalence class~$[T]$. First note that any tableau~$T'\in[T]$ obtained by permuting rows shares the same Parikh compositions as ~$T$, i.e.~$\mathfrak{P}(T)=\mathfrak{P}(T')$. This means that applying~$\Inc_\preceq \circ \pi_n^n \circ \mathfrak{P}$ to both~$T$ and~$T'$ yields the same result. Then, let~$T''\in[T]$ be a tableau obtained by applying some bijections to the columns of~$T$. By permuting the letters of the compositions of~$T$ along the associated bijections, we get the compositions of~$T''$. Then, applying~$\pi^n_n$ will reorder all compositions in~$\mathfrak{P}(T)$ and~$\mathfrak{P}(T'')$ decreasingly to produce the same Parikh partitions, hence~$\pi_n^n \circ \mathfrak{P}(T)=\pi_n^n \circ \mathfrak{P}(T'')$. Finally, let~$T'''$ be a tableau obtained by a permutation of columns of~$T$. The Parikh partitions~$\pi_n^n \circ \mathfrak{P}(T)$ and~$\pi_n^n \circ \mathfrak{P}(T''')$ are just a permutation apart . Applying~$\Inc_\preceq$ reorders the partitions increasingly according to~$\preceq$, thus~$\Inc_\preceq \circ \pi_n^n \circ \mathfrak{P}(T)=\Inc_\preceq \circ \pi_n^n \circ \mathfrak{P}(T''')$. Finally, since any tableau equivalent to~$T$ is obtained by a finite sequence of permutations of rows, columns and bijection of columns, the result follows.
\end{proof}

\begin{definition}
 The vector ~$\Inc_{\preceq}\circ\pi^n_n(\mathfrak{P}_{T})$ is called the \emph{Parikh compositions representative} of ~$T$.
\end{definition}

Now, given two tableaux and their Parikh compositions, we have a necessary condition for them to be equivalent.

Moreover, the cardinality of each class~$[T]$ can be established  without actually computing all its equivalent tableaux by only considering its internal structure. This is of particular interest for generating the Cantorian tableaux. In order to establish  the formula, we need some technical results.
Recall that  $S_n$ is the group of permutations on $n$ elements 
with identical element  $e$.

\begin{lem}
The group~$\mathbb{S}:=S_n\times S_n$ acts on~$\mathcal{T}^s_n$ by permutation of rows and  columns:
\begin{align*}
\Phi: \mathbb{S} \times \mathcal{T}^s_n & \longrightarrow  \mathcal{T}^s_n \\ 
(\sigma,\tau,T) & \longmapsto  \sigma T \tau^{-1}.
\end{align*}
\end{lem}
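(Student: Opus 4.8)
The plan is to check the two defining axioms of a left group action: that the identity $(e,e)\in\mathbb{S}$ acts as the identity map on $\mathcal{T}^s_n$, and that $\Phi$ is compatible with the componentwise multiplication of $\mathbb{S}=S_n\times S_n$. Before doing so I would make the notation $\sigma T\tau^{-1}$ precise, since $A$ is merely an alphabet and carries no ring structure, so the juxtaposition cannot be literal matrix multiplication.

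First I would identify each $\sigma\in S_n$ with its permutation matrix $P_\sigma$, fixing the convention $(P_\sigma)_{ik}=1$ if $i=\sigma(k)$ and $0$ otherwise; this convention enjoys the two properties I will use, namely $P_e=I$ and $P_\sigma P_{\sigma'}=P_{\sigma\sigma'}$, together with $(P_\tau)^{-1}=P_{\tau^{-1}}$. For a tableau $T=(a_i^j)$ I read $\sigma T\tau^{-1}$ entrywise as the array whose $(i,j)$-entry is $a_{\sigma^{-1}(i)}^{\tau^{-1}(j)}$. The point is that in each position exactly one term of the formal product is nonzero, so a single entry of $T$ is selected and no addition in $A$ is ever required; consequently the output is again an $n\times n$ array over $A$, which establishes that $\Phi$ indeed maps $\mathbb{S}\times\mathcal{T}^s_n$ into $\mathcal{T}^s_n$. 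In other words $\sigma$ permutes the rows and $\tau$ the columns of $T$.

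It then remains to verify the axioms. For the identity, $\Phi((e,e),T)=P_e\,T\,P_e^{-1}=T$ is immediate. For compatibility, I would compute, using associativity of the entrywise (permutation-matrix) product and the anti-homomorphism $(P_\tau P_{\tau'})^{-1}=P_{\tau'}^{-1}P_\tau^{-1}$,
\[
\Phi\bigl((\sigma,\tau),\Phi((\sigma',\tau'),T)\bigr)=P_\sigma\bigl(P_{\sigma'}T P_{\tau'}^{-1}\bigr)P_\tau^{-1}=(P_\sigma P_{\sigma'})\,T\,(P_\tau P_{\tau'})^{-1}=\Phi\bigl((\sigma\sigma',\tau\tau'),T\bigr),
\]
which is exactly $\Phi\bigl((\sigma,\tau)(\sigma',\tau'),T\bigr)$. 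I do not expect any genuine obstacle here: the verification is mechanical once the notation is pinned down. The only point that needs care---and the reason the column part appears as $\tau^{-1}$ rather than $\tau$---is that inversion reverses the order of a product, so placing the inverse on the right-hand factor is precisely what turns the would-be anti-action of the column permutations into a genuine left action.
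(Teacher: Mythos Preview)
Your proof is correct and follows exactly the same route as the paper: verify the identity axiom $\Phi((e,e),T)=T$ and the compatibility axiom $\Phi((\sigma,\tau),\Phi((\sigma',\tau'),T))=\Phi((\sigma\sigma',\tau\tau'),T)$, using that $(\tau\tau')^{-1}=\tau'^{-1}\tau^{-1}$. The only difference is that you spell out the meaning of $\sigma T\tau^{-1}$ via permutation matrices, whereas the paper leaves this implicit; this extra care is harmless and the argument is otherwise identical.
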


\begin{proof}
The two group axioms are easily verified: $\Phi(e,e,T)=eTe=T$, for all~$T\in\mathcal{T}^s_n$ and~$\Phi(\sigma_1\sigma_2,\tau_1\tau_2,T)=\sigma_1\sigma_2T(\tau_1\tau_2)^{-1}=\sigma_1\sigma_2T\tau_2^{-1}\tau_1^{-1}=\Phi(\sigma_1,\tau_1,\Phi(\sigma_2,\tau_2,T))$ for all~$(\sigma_1,\tau_1),(\sigma_2,\tau_2)\in\mathbb{S}$ and~$T\in\mathcal{T}^s_n$.
\end{proof}

\begin{lem}\label{lem:orbit_perm}
 The cardinality  of the orbit~$\mathcal{O}_\Phi(T)$ of~$T$ through the action~$\Phi$ is
\begin{equation}\label{eq:orbit_perm}
 |\mathcal{O}_\Phi(T)|=\frac{|\mathbb{S}|}{|\Stab_\Phi(T)|}=\frac{(n!)^2}{\left(\prod_{j=1}^rg_j!\prod_{i=1}^q{f_i!}+ \eta\right)},
\end{equation}
 where~$(f_1,f_2,\dots,f_q)$ is the vector of multiplicities of row-words of~$T$, $(g_1,g_2,\dots,g_r)$ its column-words multiplicities, $\eta=|\{(\sigma,\tau)\in \mathbb{S}\mid \sigma T\tau^{-1}= T \text{ and } \sigma T\neq T\}|$, $q=|L|$ and $r=|C|$.
\end{lem}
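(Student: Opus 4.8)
The plan is to derive the first equality from the orbit--stabilizer theorem and then to spend all the effort on evaluating $|\Stab_\Phi(T)|$. Since the previous lemma shows that $\Phi$ is a genuine action of $\mathbb{S}=S_n\times S_n$, the orbit--stabilizer theorem gives $|\mathcal{O}_\Phi(T)|=|\mathbb{S}|/|\Stab_\Phi(T)|$ at once, with $|\mathbb{S}|=(n!)^2$. Thus the entire content of the statement reduces to proving that $|\Stab_\Phi(T)|=\prod_{j=1}^r g_j!\prod_{i=1}^q f_i!+\eta$.

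First I would split the stabilizer $H:=\Stab_\Phi(T)=\{(\sigma,\tau)\in\mathbb{S}\mid \sigma T\tau^{-1}=T\}$ into two disjoint pieces according to whether permuting rows alone already fixes $T$:
\[
H_0=\{(\sigma,\tau)\in H\mid \sigma T=T\},\qquad H_1=\{(\sigma,\tau)\in H\mid \sigma T\neq T\}.
\]
By the very definition of $\eta$ we have $|H_1|=\eta$, so the only remaining task is to establish $|H_0|=\prod_{j=1}^r g_j!\prod_{i=1}^q f_i!$.

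Next I would analyse $H_0$ through a product decomposition. The key observation is that $\sigma T=T$ holds precisely when $\sigma$ permutes rows within blocks of equal row-words; as the row-word multiplicities are $(f_1,\dots,f_q)$, there are exactly $\prod_{i=1}^q f_i!$ such $\sigma$. Moreover, once $\sigma T=T$, the stabilizing condition $\sigma T\tau^{-1}=T$ collapses to $T\tau^{-1}=T$, which no longer involves $\sigma$ and holds precisely when $\tau$ permutes columns within blocks of equal column-words; with column-word multiplicities $(g_1,\dots,g_r)$ there are exactly $\prod_{j=1}^r g_j!$ such $\tau$. Hence $H_0$ is in bijection with the direct product of these two sets, giving $|H_0|=\prod_{j=1}^r g_j!\prod_{i=1}^q f_i!$. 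Adding $|H_0|$ and $|H_1|$ yields the stated stabilizer cardinality, and substituting into the orbit--stabilizer formula finishes the argument.

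The step I expect to require the most care is the characterization of the row-fixing (and, dually, column-fixing) permutations together with the decoupling inside $H_0$: one must verify that $\sigma T=T$ is equivalent to $\sigma$ acting as a product of symmetric groups on the classes of identical rows, and that \emph{under this condition} the admissible $\tau$ form a set that does not depend on the particular $\sigma$, so that the count genuinely factors as a product rather than something more entangled. The pairs in which a row permutation and a column permutation compensate one another without either fixing $T$ separately are exactly the ones absorbed into $\eta$; I would not try to evaluate $\eta$ in closed form here, treating it as the definitionally given correction term.
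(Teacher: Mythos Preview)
Your proposal is correct and follows essentially the same route as the paper: invoke the orbit--stabilizer theorem, then split $\Stab_\Phi(T)$ into the pairs with $\sigma T=T$ (counted by the Young-subgroup product $\prod_j g_j!\prod_i f_i!$) and the remaining pairs, which are exactly the set defining $\eta$. Your write-up is in fact slightly more explicit than the paper's, since you spell out why the condition $\sigma T=T$ decouples the two factors inside $H_0$.
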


\begin{proof}
Using the  orbit-stabilizer theorem for group actions, we only need to find the cardinality of the stabilizer of a tableau~$T$. Indeed, if two or more rows are equal, then any permutation within these rows does not alter~$T$. 
The subgroup of row permutations that stabilizes~$T$ is a Young subgroup of cardinality~$\prod_{i=1}^q{f_i!}$. Similarly for columns, the subgroup of column permutations that stabilizes~$T$ is a Young subgroup of cardinality~$\prod_{j=1}^rg_j!$. Then, it might happen that a combination of column and row permutations still stabilizes~$T$, without actually fixing~$T$ when acting separately. The set of such pair is~$\{(\sigma,\tau)\in \mathbb{S}\mid \sigma T\tau^{-1}= T \text{ and } \sigma T\neq T\}$.
\end{proof}

\begin{lem}
The group~$\mathbb{B}:=(S_s)^n$ acts on the right on~$\mathcal{T}^s_n$ by bijection of columns:
\begin{align*}
\Psi: \mathcal{T}^s_n \times \mathbb{B}  & \longrightarrow  \mathcal{T}^s_n \\ (T,(\beta_1,\dots,\beta_n)) & \longmapsto  T\cdot(\beta_1,\dots,\beta_n).
\end{align*}
\end{lem}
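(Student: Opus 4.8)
The plan is to verify the two defining axioms of a right group action for $\Psi$, exploiting the fact that $\mathbb{B}=(S_s)^n$ is a direct product in which the $j$-th factor affects only the $j$-th column. Throughout I identify each $\beta_j\in S_s$ with a bijection of the alphabet $A$ (legitimate since $|A|=s$), and I read $T\cdot(\beta_1,\dots,\beta_n)$ as the tableau whose $j$-th column is obtained from that of $T$ by replacing every entry $a$ with its image under $\beta_j$, the other columns being unaffected.

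First I would dispatch the identity axiom. The neutral element of $\mathbb{B}$ is $(e,\dots,e)$, where $e$ is the identity of $S_s$, that is, the identity bijection of $A$. Applying the identity bijection to each column fixes every entry, so $\Psi(T,(e,\dots,e))=T$ for every $T\in\mathcal{T}^s_n$; this is immediate once the reading above is fixed.

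The substance lies in the compatibility axiom $(T\cdot\mathbf{b})\cdot\mathbf{c}=T\cdot(\mathbf{b}\mathbf{c})$ for $\mathbf{b}=(\beta_1,\dots,\beta_n)$ and $\mathbf{c}=(\gamma_1,\dots,\gamma_n)$ in $\mathbb{B}$. Because the direct-product multiplication is componentwise, $\mathbf{b}\mathbf{c}=(\beta_1\gamma_1,\dots,\beta_n\gamma_n)$, and because distinct columns are transformed independently, it suffices to track a single column. Applying $\mathbf{b}$ and then $\mathbf{c}$ sends an entry $a$ of the $j$-th column first to its image under $\beta_j$ and then to its image under $\gamma_j$; I would confirm that this composite coincides with the single bijection $\beta_j\gamma_j$ prescribed by $\mathbf{b}\mathbf{c}$, and then reassemble the columns.

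This last coincidence is exactly where care is needed, and I expect it to be the only real obstacle, the rest being bookkeeping: one must pin down the convention for how bijections compose so that the action is genuinely a \emph{right} action rather than a left one. Concretely, letting $S_s$ act on the right of $A$ (so that $a\cdot(\beta_j\gamma_j)=(a\cdot\beta_j)\cdot\gamma_j$) makes the order of the two successive transformations agree with the product in $\mathbb{B}$; under the opposite, ``apply-the-outer-function-last'' convention the very same computation would instead yield $(T\cdot\mathbf{b})\cdot\mathbf{c}=T\cdot(\mathbf{c}\mathbf{b})$, exhibiting a left action. Once this convention is fixed, the componentwise verification closes the argument, since $(\beta_1,\dots,\beta_n)$ decomposes into its independent per-column effects.
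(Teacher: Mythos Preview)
Your proposal is correct and follows exactly the approach the paper intends: the paper's own proof is the single sentence ``This is straightforward,'' and what you have written is precisely the routine verification of the two group-action axioms that this sentence abbreviates. Your attention to the right-action convention is appropriate and is the only point requiring any care.
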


\begin{proof}
This is straightforward.
\end{proof}

\begin{lem}\label{lem:orbit_bij}
Let~$T\in\mathcal{T}^s_n$ have the Parikh compositions~$\mathfrak{P}_T=(\mathfrak{p}_{c_1},\mathfrak{p}_{c_2},\dots, \mathfrak{p}_{c_n})$. The cardinality of the orbit~$\mathcal{O}_\Psi(T)$ of~$T$ under the action~$\Psi$ is
\begin{equation}\label{eq:orbit_bij}
|\mathcal{O}_\Psi(T)|=\frac{|\mathbb{B}|}{|\Stab_{\Psi}(T)|}=\prod_{i=1}^n\frac{s!}{(s-\ell^+(\mathfrak{p}_{c_i}))!},
\end{equation}
where~$\ell^+(\mathfrak{p}_{c_i})$ is the number of non-zero letters in~$\mathfrak{p}_{c_i}$.
\end{lem}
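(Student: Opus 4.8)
The plan is to exploit the fact that the orbit--stabilizer theorem, already invoked in the statement, reduces the computation to determining $|\Stab_\Psi(T)|$, together with the observation that the action $\Psi$ splits as a direct product over the columns. Indeed, in $\mathbb{B}=(S_s)^n$ the $i$-th factor $S_s$ operates only on the $i$-th column and leaves all others untouched, so a tuple $(\beta_1,\dots,\beta_n)$ fixes $T$ if and only if each $\beta_i$ fixes the corresponding column-word $c_i$. The stabilizer therefore factorizes as $\Stab_\Psi(T)=\prod_{i=1}^n \Stab_{S_s}(c_i)$, where $\Stab_{S_s}(c_i)$ is the stabilizer of the single word $c_i$ under alphabet bijections. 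This is the crucial structural remark: in contrast with Lemma \ref{lem:orbit_perm}, where combined row-and-column permutations could fix $T$ without either acting trivially and thus forced the correction term $\eta$, here the factors act on disjoint coordinates, so no such cross-term can arise and the factorization is exact.

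Next I would compute the stabilizer of a single column. A bijection $\beta\in S_s$ sends $c_i$ to the word obtained by replacing each entry $\alpha$ by $\beta(\alpha)$, and this equals $c_i$ precisely when $\beta(\alpha)=\alpha$ for every letter $\alpha$ that actually occurs in $c_i$. The letters occurring in $c_i$ are exactly those indexed by the non-zero parts of the Parikh composition $\mathfrak{p}_{c_i}$, and there are $\ell^+(\mathfrak{p}_{c_i})$ of them. Hence $\beta$ lies in the stabilizer if and only if it fixes these $\ell^+(\mathfrak{p}_{c_i})$ letters and permutes the remaining $s-\ell^+(\mathfrak{p}_{c_i})$ letters arbitrarily, which gives $|\Stab_{S_s}(c_i)|=(s-\ell^+(\mathfrak{p}_{c_i}))!$.

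Combining the two steps yields $|\Stab_\Psi(T)|=\prod_{i=1}^n (s-\ell^+(\mathfrak{p}_{c_i}))!$. Since $|\mathbb{B}|=(s!)^n$, the orbit--stabilizer theorem gives
\[
|\mathcal{O}_\Psi(T)|=\frac{(s!)^n}{\prod_{i=1}^n (s-\ell^+(\mathfrak{p}_{c_i}))!}=\prod_{i=1}^n\frac{s!}{(s-\ell^+(\mathfrak{p}_{c_i}))!},
\]
as claimed. The only genuine subtlety, and the point I would justify explicitly, is the characterization of the single-column stabilizer: one must check that a bijection fixing the word $c_i$ must fix \emph{each} letter appearing in it, rather than merely preserving the multiset of its letters. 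Everything else is a direct, coordinatewise application of orbit--stabilizer.
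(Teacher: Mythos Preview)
Your proof is correct and follows essentially the same approach as the paper's: both apply orbit--stabilizer, identify the stabilizer of each column-word as the set of alphabet bijections fixing every letter that occurs in it (hence of size $(s-\ell^+(\mathfrak{p}_{c_i}))!$), and take the product over columns. Your version is a bit more explicit about why the stabilizer factorizes over the columns and about the distinction between fixing each letter versus preserving the multiset, but the argument is the same.
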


\begin{proof}
Again, we use  the orbit-stabilizer theorem. Consider the $i$th column-word~$c_i$ of the tableau~$T$. We must count the number of bijections on $A$ that fix~$c_i$. Then, each letter in~$c_i$ must remain fixed, while  all other letters might be permuted. This yields ~$(s-\ell^+(\mathfrak{p}_{c_i}))!$ distinct bijections, where~$\ell^+(\mathfrak{p}_{c_i})$ is the number of distinct letters in~$c_i$, i.e. the number of non-zero letters in~$\mathfrak{p}_{c_i}$. Then, taking the product for all~$1\leq i \leq n$, we get the desired formula.
\end{proof}

\begin{theorem}\label{thm:card_class}
Let~$T\in\mathcal{T}^s_n$ have the Parikh compositions~$\mathfrak{P}_T=(\mathfrak{p}_{c_1},\mathfrak{p}_{c_2},\dots, \mathfrak{p}_{c_n})$. Let~$(f_1,f_2,\dots,f_q)$,~$(g_1,g_2,\dots,g_r)$, $\eta$, $q$ and~$r$ be defined as in Lemma $\ref{lem:orbit_perm}$ and~$\ell^+(\mathfrak{p}_{c_i})$ defined as in Lemma $\ref{lem:orbit_bij}$. The cardinality of~$[T]$ is
\begin{equation}
\#[T]=\frac{|\mathcal{O}_\Phi(T)|\cdot|\mathcal{O}_\Psi(T)|}{\vartheta}  =  \frac{(n!)^2\prod_{i=1}^n\frac{s!}{(s-\ell^+(\mathfrak{p}_{c_i}))!}}{\left(\prod_{j=1}^rg_j!\prod_{i=1}^q{f_i!}+ \eta\right)\vartheta}, \label{eq:card_class}
\end{equation}
where~$\vartheta=|\mathcal{O}_\Psi(T)\cap\mathcal{O}_\Phi(T)|$.
\end{theorem}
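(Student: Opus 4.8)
The plan is to recognize the class $[T]$ as a single orbit under the group $G$ generated by all three operations defining $\simc$, and then to reduce the statement to a general fact about orbits under a product of two subgroups. Realize $G$ as the abstract semidirect product $\mathbb{B}\rtimes\mathbb{S}$ acting on $\mathcal{T}^s_n$, with canonical subgroups $A=\mathbb{S}$ (acting by $\Phi$, permutations of rows and columns) and $B=\mathbb{B}$ (acting by $\Psi$, bijections of columns). With this identification $[T]=\mathcal{O}_G(T)$, while $\mathcal{O}_\Phi(T)=\mathcal{O}_A(T)$ and $\mathcal{O}_\Psi(T)=\mathcal{O}_B(T)$ are exactly the two orbits whose cardinalities were computed in Lemmas \ref{lem:orbit_perm} and \ref{lem:orbit_bij}. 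The theorem will drop out once we know the product structure $G=AB$ and can relate the three orbit sizes through $\vartheta$.

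First I would pin down the group structure. Row permutations act on the left while column bijections act on the right, so they commute; a column permutation conjugates a tuple $(\beta_1,\dots,\beta_n)\in\mathbb{B}$ into the reindexed tuple, so column permutations and column bijections assemble into a wreath product $S_s\wr S_n$. Hence $A$ normalizes $B$, the product $G=AB=BA$ is genuinely a subgroup ($G\cong\mathbb{B}\rtimes\mathbb{S}$), and $A\cap B=\{e\}$ by construction, so that $|G|=|A|\,|B|=(n!)^2(s!)^n$. Since $\simc$ is by definition generated by precisely these operations, $[T]$ is the $G$-orbit of $T$, and $\#[T]=|G|/|H|$ with $H:=\Stab_G(T)$ by the orbit--stabilizer theorem.

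Next comes the counting. Because $\Stab_A(T)=A\cap H$ and $\Stab_B(T)=B\cap H$, orbit--stabilizer also gives $|\mathcal{O}_\Phi(T)|=|A|/|A\cap H|$ and $|\mathcal{O}_\Psi(T)|=|B|/|B\cap H|$, reproducing Lemmas \ref{lem:orbit_perm} and \ref{lem:orbit_bij}. The crux is to identify $\vartheta=|\mathcal{O}_\Phi(T)\cap\mathcal{O}_\Psi(T)|$, which I would obtain by double counting the set $\{(a,b)\in A\times B : aT=bT\}$. Grouping the pairs by their common image $y=aT=bT$, each $y\in\mathcal{O}_\Phi(T)\cap\mathcal{O}_\Psi(T)$ is hit by exactly $|A\cap H|\,|B\cap H|$ pairs, so the set has cardinality $\vartheta\,|A\cap H|\,|B\cap H|$. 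On the other hand $aT=bT\iff a^{-1}b\in H$, and since $A\cap B=\{e\}$ and $G=AB$, the map $(a,b)\mapsto a^{-1}b$ is a bijection of $A\times B$ onto $G\supseteq H$; hence the same set is in bijection with $H$ and has cardinality $|H|$. Equating the two counts gives $\vartheta=|H|/(|A\cap H|\,|B\cap H|)$, and substituting everything yields
\[
\frac{|\mathcal{O}_\Phi(T)|\,|\mathcal{O}_\Psi(T)|}{\vartheta}=\frac{|A|\,|B|}{|H|}=\frac{|G|}{|H|}=\#[T],
\]
which is exactly \eqref{eq:card_class} after inserting the explicit values from the two lemmas.

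The main obstacle I anticipate is the bookkeeping around the group rather than the counting itself: one must verify carefully that the three operations close up into $\mathbb{B}\rtimes\mathbb{S}$, in particular that interleaving column permutations and column bijections never leaves $AB$ (this is the wreath-product relation), and that $A\cap B$ is trivial so that $|G|=(n!)^2(s!)^n$. Once that is secured, the double count identifying $\vartheta$ with the orbit intersection is short; the only delicate point there is the fibre computation for $(a,b)\mapsto a^{-1}b$, where the triviality of $A\cap B$ is what makes the map a bijection and thus pins $\vartheta$ down cleanly.
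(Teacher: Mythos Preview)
Your proof is correct and follows essentially the same route as the paper: realize $[T]$ as the orbit under the semidirect product $G=\mathbb{B}\rtimes\mathbb{S}$, apply orbit--stabilizer to $G$, and compare with the orbits under the two factors. The paper reaches $\vartheta=|H|/(|A\cap H|\,|B\cap H|)$ by asserting a bijection between $\mathcal{O}_\Psi(T)\cap\mathcal{O}_\Phi(T)$ and an auxiliary set $\Stab_{\Psi\Delta\Phi}(T)\subseteq H$, whereas your double count of $\{(a,b):aT=bT\}$ via the bijection $(a,b)\mapsto a^{-1}b$ makes this identity explicit and transparent; otherwise the arguments coincide.
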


\begin{proof}
The semi-direct product~$\mathbb{B}\rtimes\mathbb{S}$ acts on $\mathcal{T}^s_n$ as follows:
\begin{align}\label{eq:action}
\Omega: \mathcal{T}^s_n \times (\mathbb{B}\rtimes\mathbb{S})  & \longrightarrow  \mathcal{T}^s_n,  \\ (T,(\beta,\sigma)) & \longmapsto  T\cdot(\beta,\sigma)=T\beta\sigma \nonumber
\end{align}
The action  $\beta$ consists in applying a set of bijections $(\beta_1, \beta_2, \dots, \beta_n)$ on $A$ respectively on  $(c_1, c_2, \dots, c_n)$. Writing~$T\beta\sigma$ means that~$\beta$ acts first, and then~$\sigma$ permutes the rows and columns. It is obvious that~$T\cdot(e,e)=T$, for all~$T\in\mathcal{T}^s_n$. Then, given~$(\beta_1,\sigma_1),(\beta_2,\sigma_2)\in(\mathbb{B}\rtimes\mathbb{S})$, we have
\begin{align*}
T\cdot((\beta_1,\sigma_1)*(\beta_2,\sigma_2)) & =  T\cdot(\beta_1\sigma_1\beta_2\sigma_1^{-1},\sigma_1\sigma_2)\\
 & =  T\beta_1\sigma_1\beta_2\sigma_1^{-1}\sigma_1\sigma_2 \\
  & =  T\beta_1\sigma_1\beta_2\sigma_2 \\
  & =  (T\beta_1\sigma_1)\beta_2\sigma_2 \\
  & =  (T\cdot(\beta_1,\sigma_1))\beta_2\sigma_2 \\
  & =  (T\cdot(\beta_1,\sigma_1))\cdot(\beta_2,\sigma_2)\;. 
\end{align*}
Thus, this is a valid group action. Then, using the orbit-stabilizer theorem, we get
\begin{equation*}
|\mathcal{O}_\Omega(T)|  =  \frac{|\mathbb{B}\rtimes\mathbb{S}|}{|\Stab_{\Omega}(T)|} = \frac{|\mathbb{B}|\cdot|\mathbb{S}|}{|\Stab_{\Psi}(T)|\cdot|\Stab_{\Phi}(T)|\cdot|\Stab_{\Psi\Delta\Phi}(T)|},
\end{equation*}
where~$\Stab_{\Psi\Delta\Phi}(T)=\{(\beta,\sigma)\in\mathbb{B}\rtimes\mathbb{S}| T\beta\sigma=T\text{ and }T\beta \neq T\}\cup\{(e,e)\}$. A simple computation shows that this last set is in bijection with ~$\mathcal{O}_\Psi(T)\cap\mathcal{O}_\Phi(T)$, so that 
\begin{equation*}
\frac{|\mathbb{B}|\cdot|\mathbb{S}|}{|\Stab_{\Psi}(T)|\cdot|\Stab_{\Phi}(T)|\cdot|\Stab_{\Psi\Delta\Phi}(T)|}  =  \frac{|\mathcal{O}_\Phi(T)|\cdot|\mathcal{O}_\Psi(T)|}{|\mathcal{O}_\Psi(T)\cap\mathcal{O}_\Phi(T)|},
\end{equation*}
and using Equations \eqref{eq:orbit_perm} and \eqref{eq:orbit_bij} yields what we claimed.
\end{proof}

\begin{remark*}
 On one hand, this equation leads to a closed formula for Cantorian tableaux of small dimensions. On the other hand, the integers~$\eta$ and~$\vartheta$ still need to be computed. Indeed, it would be interesting to study the complexity of the computation of these variables in detail, which we suspect to be a hard problem.
 \end{remark*}

\section{Generation of Cantorian tableaux}\label{sec:Generation}
First, recall that~$\mathcal{T}^s_n$ is totally ordered by the Parikh composition order on~$A^\star$ as each tableau can be considered as a vector of words of length~$n$ in~$A^\star$. 

\begin{definition}
 A tableau~$T$ is \emph{reduced} (or in its \emph{reduced form}) if its Parikh compositions~$\mathfrak{P}_T$ is equal to its Parikh compositions representative.
\end{definition}

Observe that there are many reduced tableaux in a given class. The next definition gives a canonical representative having convenient properties for computations.

\begin{definition}
 Let~$T\in\mathcal{T}^s_n$. If~$T\simc T'$ implies that~$T\blacktriangleleft T'$ for all~$T'\in\mathcal{T}^s_n$, then we call~$T$ a \emph{minimal reduced tableau} and we denote it by~$T^\blacktriangleleft$.
\end{definition}
By  definition of~$\blacktriangleleft$, such a tableau is indeed reduced and unique, which justifies its name. Since reduced tableaux are easy to obtain, their use significantly improves the computation of Cantorian tableaux. Nevertheless, passing from a reduced tableau to the minimal reduced tableau can be very costly.
\begin{remark*}
 Since computing the minimal reduced tableau from a reduced tableau can be cumbersome, it would be interesting to obtain an algorithm that yields the reduced form of a tableau which minimizes the number of reduced tableaux it may possibly produce.
\end{remark*}
In Section \ref{sec:EC}, we established a formula for the number of Cantorian tableaux in a given class. It is also possible to generate all tableaux in a class using the action of~$\mathbb{B}\rtimes\mathbb{S}$ on ~$\mathcal{T}^s_n$ defined by equation \eqref{eq:action}. All that remains to do  is to find all Cantorian class representatives of~$\mathcal{T}^s_n/{\simc}$. To do so, we consider all Parikh compositions representatives that respect the necessary conditions given in \cite{BMRR}:
\begin{condition}[Corollary 2 of \cite{BMRR}]\label{sparse}
  Let $T$ be an $n\times n$ tableau and suppose some letter, 
  say $a$, occurs at least  $n^2-n+1$ times  in $T$.  Then $T$ is
  non-Cantorian. More specifically
  \begin{equation*}
    a^n \in L \cap \Perm(T). 
  \end{equation*}
  If $a$ occurs only $n^2-n$ times, the result need not be true.
\end{condition}
To reduce the computations, we also use the following result in the case of a $2$-letter alphabet. 
\begin{theorem} [Theorem 7 of \cite{BMRR}]
The number $c(n,p)$ of Cantorian tableaux over  $A=\{a,b\}$
  with exactly $p$ occurrences of the letter $b$ is
\[   c(n,p) = 
   \begin{cases}
     0 &\text{ for } p<n,\\
     n &\text{ for } p=n\geq 3,\\
     0 &\text{ for } p=n+1\text{ and } n\geq 4,\\
     0 &\text{ for } p=n+2\text{ and } n\geq 5.
   \end{cases}
\]
\end{theorem}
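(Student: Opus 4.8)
The plan is to translate the Cantorian property into a statement about transversals and then, in each range of $p$, either invoke the density Condition~\ref{sparse} or exhibit explicitly a permutation whose diagonal-word is a row-word. The starting point is that a diagonal-word $a^1_{\pi(1)}a^2_{\pi(2)}\cdots a^n_{\pi(n)}$ is read off the cells $(\pi(j),j)$, one per column and one per row; hence a word $w$ lies in $\Perm(T)$ exactly when some permutation matrix selects cells matching $w$ letterwise. In particular $a^n\in\Perm(T)$ iff the bipartite graph on rows versus columns whose edges are the $a$-cells has a perfect matching, and by Hall's theorem this fails iff $T$ carries an all-$b$ submatrix on some $s$ rows and $n-s+1$ columns. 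Such a block uses at least $s(n-s+1)$ copies of $b$, and since $s\mapsto s(n-s+1)$ is concave with boundary value $n$ at $s=1,n$, it is $\ge n$ throughout. The regime $p<n$ is then immediate: the letter $a$ occurs more than $n^2-n$ times, so Condition~\ref{sparse} makes $T$ non-Cantorian and $c(n,p)=0$.

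For $p=n$ I would first verify the lower bound by exhibiting the $n$ candidate tableaux, namely those with one full row of $b$'s and every other entry $a$. For such a $T$ every transversal meets the $b$-row in exactly one cell, so $\Perm(T)$ is precisely the set of words with a single $b$, whereas $L=\{a^n,b^n\}$; the two are disjoint for $n\ge 2$, so each such $T$ is Cantorian. The converse is the substantive half. If $T$ has an all-$a$ row then $a^n\in L$, so Cantorianity forces $a^n\notin\Perm(T)$, i.e.\ no $a$-transversal; by the quadratic bound with $p=n$ the only all-$b$ blocks possible are a full $b$-row or a full $b$-column, and a full column is incompatible with an all-$a$ row, leaving exactly the candidate tableaux. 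If instead every row carries a $b$ then, since $p=n$, each row carries exactly one, encoded by a map $\phi$ sending a row to its $b$-column. When $\phi$ is constant all $b$'s lie in one column and $\Perm(T)=L$ is a single word, so $T$ is non-Cantorian; when $\phi$ is non-constant I would kill $T$ by producing a diagonal with a single $b$. Choosing $j_0$ with $\gamma_{j_0}=\max_j\gamma_j$ (where $\gamma_j=|\phi^{-1}(j)|$), fixing one row of $\phi^{-1}(j_0)$ in column $j_0$, and matching the remaining columns to the remaining rows while avoiding each column's $b$-cell, Hall's condition holds because the fibres $\phi^{-1}(j)$ are disjoint (so any two columns already see all remaining rows) and because $\gamma_j\le n-2$ for $j\ne j_0$ (otherwise two columns would hold at least $2(n-1)>n$ copies of $b$). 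This yields a single-$b$ diagonal equal to a row-word, so $T$ is non-Cantorian; hence $c(n,n)=n$. Note that $n\ge 3$ enters exactly here, through the derangement-type matching on $n-1\ge 2$ columns.

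For $p=n+1$ with $n\ge 4$ and $p=n+2$ with $n\ge 5$ the goal is to show non-Cantorianity always. A pigeonhole count gives a row with at most $p/n<2$, hence at most one, occurrence of $b$. The thresholds on $n$ are chosen so that $s(n-s+1)>p$ for every intermediate $2\le s\le n-1$, whence ``no $a$-transversal'' again forces a full $b$-row or a full $b$-column and nothing else. If some row is all $a$, then $a^n\in L$ and either $a^n\in\Perm(T)$ (done) or a full $b$-row is present (a full $b$-column being incompatible with an all-$a$ row); in the latter situation every diagonal carries the $b$ of the full row in the column where that row is placed, so placing the full $b$-row in one of the $b$-columns of another non-all-$a$ row, routing that row's remaining $b$'s (at most two stray ones in all) onto or off the diagonal as needed, realizes that row's word. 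If instead every row carries a $b$, the same single-$b$-diagonal construction applies, the few extra $b$'s merely enlarging the forbidden sets, while the degenerate full-column and near-full-column configurations are dispatched directly as above; here one uses that at most $p-n\le 2$ rows can carry two or more $b$'s, so the relevant intersections in Hall's criterion stay small. In every case a row-word reappears as a diagonal, so $c(n,p)=0$.

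The hard part is the bookkeeping around the degenerate configurations containing a full or nearly full $b$-line: there $a^n$ is unavailable as the witnessing diagonal and one must instead match a specific one-$b$ (or two-$b$) row-word, checking by hand that the stray $b$'s can be kept off the chosen transversal. The clean engine behind every case is the concave bound $s(n-s+1)\ge n$ together with the disjointness (or near-disjointness) of the $b$-fibres, which is what makes the residual Hall/SDR conditions go through. I expect the main risk of error to be in confirming that the stated thresholds $n\ge 4$ and $n\ge 5$ are exactly the ones ruling out an intermediate all-$b$ block on two rows and $n-1$ columns, which needs $2(n-1)$ occurrences of $b$ and hence fits inside $n+1$ (respectively $n+2$) copies only when $n\le 3$ (respectively $n\le 4$).
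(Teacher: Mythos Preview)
This theorem is quoted in the present paper as Theorem~7 of \cite{BMRR} and is used purely as a black box to prune the search space; the paper gives no proof of its own, so there is nothing here to compare your argument against directly. That said, your plan is sound and, for $p<n$ and $p=n$, essentially complete. The reduction of ``$a^n\notin\Perm(T)$'' to the existence of an all-$b$ rectangle via Hall, combined with the concave lower bound $s(n-s+1)\ge n$ (equality only at $s\in\{1,n\}$), pins down the Cantorian tableaux with $p=n$ exactly as you describe, and your identification of where $n\ge 3$ enters is correct.

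For $p=n+1$ and $p=n+2$ the sketch is plausible but one point needs more care than you indicate. Your recipe places a single $b$ on the diagonal in the column $j_0$ of maximal weight and then completes by a matching; for this to witness non-Cantorianity the word $a^{j_0-1}ba^{n-j_0}$ must actually occur as a row of $T$. When the column maxima are achieved only through the one or two multi-$b$ rows this can fail: with $n=5$ and rows $bbaaa,\ bbaaa,\ aabaa,\ aaaba,\ aaaab$ one has $\gamma_1=\gamma_2=2$ maximal, yet no single-$b$ row has its $b$ in column $1$ or $2$, so the diagonal word $baaaa$ is not in $L$. The repair is easy---either shift $j^*$ to a column owned by some single-$b$ row and redo the Hall check there (using that at most two rows are multi-$b$, so for $|S|\ge 2$ the neighbourhood misses at most those), or instead aim for the two-$b$ word of one of the heavy rows---but it does mean the ``same single-$b$-diagonal construction'' does not go through verbatim and an extra case split is genuinely required. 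Once that branch is written out your scheme closes.
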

For each Parikh compositions representative, we can build the corresponding distinct minimal reduced tableaux by using a recursive algorithm on the columns. Then, we test each such minimal reduced tableau and record the Cantorian ones. Finally, we compute the cardinality of each class using Equation~\eqref{eq:card_class}.\\

Below we list   the Cantorian minimal reduced representatives found for small dimensions and their class cardinality.\\

\noindent
Dimension~$n=2$ with~$s\geq2$:
{\footnotesize
\[
R^s_2  =  \begin{bmatrix}
                          1 & 1 \\
                          2 & 2 \\
                         \end{bmatrix} \quad,\quad 
|[R^s_2]|  =  s^2(s-1)^2 .
\]
}\\
\smallskip
Dimension~$n=3$,~$s=2$:
{\footnotesize
\[R^2_3  =  \begin{bmatrix}
                          1 & 1 & 1\\
                          1 & 1 & 1\\
                          2 & 2 & 2\\
                         \end{bmatrix} \quad,\quad 
|[R^2_3]|  =  24 .
\]
}\\
Dimension~$n=3$,~$s=3$:
{\footnotesize
\[
   \begin{array}{c}
          \begin{bmatrix}
                          1 & 1 & 1\\
                          1 & 1 & 1\\
                          2 & 2 & 2\\
                         \end{bmatrix} \\ \noalign{\medskip}
|[R_1]| = 648
   \end{array} ,
   \begin{array}{c}
          \begin{bmatrix}
                          1 & 1 & 1\\
                          1 & 1 & 2\\
                          2 & 2 & 3\\
                         \end{bmatrix} \\ \noalign{\medskip}
|[R_2]| = 1944	
   \end{array} ,
   \begin{array}{c}
          \begin{bmatrix}
                          1 & 1 & 1\\
                          1 & 2 & 2\\
                          2 & 3 & 3\\
                         \end{bmatrix} \\ \noalign{\medskip}
|[R_3]| = 1944
   \end{array} ,
   \begin{array}{c}
         \begin{bmatrix}
                          1 & 1 & 1\\
                          1 & 2 & 2\\
                          1 & 3 & 3\\
                         \end{bmatrix} \\ \noalign{\medskip}
|[R_4]| = 324
   \end{array} , 
   \begin{array}{c}
        \begin{bmatrix}
                          1 & 1 & 1\\
                          2 & 2 & 2\\
                          3 & 3 & 3\\
                         \end{bmatrix}\\ \noalign{\medskip}
|[R_5]| = 216  
   \end{array} .
\]}

\medskip
The following lemma is useful for  establishing a formula for Cantorian tableaux of fixed small dimensions and variable alphabet size.

\begin{lem}\label{lem:class}
If~$s>n$, then the Cantorian minimal reduced forms in~$\mathcal{T}^s_n$ are the Cantorian minimal reduced forms in~$\mathcal{T}^n_n$.
\end{lem}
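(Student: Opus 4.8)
The plan is to show that being \emph{reduced} already forces every entry into the sub-alphabet $\{\alpha_1,\dots,\alpha_n\}$, and then to verify that neither $\blacktriangleleft$ nor $\simc$ can detect the unused letters $\alpha_{n+1},\dots,\alpha_s$. First I would unwind the reduced condition: if $T\in\mathcal{T}^s_n$ is reduced then $\mathfrak{P}_T=\Inc_\preceq\circ\pi_n^n(\mathfrak{P}_T)$, and since $\pi_n^n$ sorts each component into a partition while $\Inc_\preceq$ merely permutes the components, every $\mathfrak{p}_{c_i}$ must itself be a weakly decreasing sequence of length $s$ and weight $n$. Its nonzero parts therefore occupy the initial positions $1,\dots,\ell^+(\mathfrak{p}_{c_i})$ with $\ell^+(\mathfrak{p}_{c_i})\le n$, so through the Parikh map the column $c_i$ uses only the letters $\alpha_1,\dots,\alpha_{\ell^+(\mathfrak{p}_{c_i})}$. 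Consequently every reduced tableau of $\mathcal{T}^s_n$---in particular every minimal reduced one---uses only $\{\alpha_1,\dots,\alpha_n\}$ and thus lies in the natural copy of $\mathcal{T}^n_n$. I expect this short observation to be the conceptual core of the proof.

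Next I would check that, restricted to tableaux with entries in $\{\alpha_1,\dots,\alpha_n\}$, both the order and the equivalence are independent of the ambient alphabet size. For the order, two such tableaux have the same column-words and Parikh compositions differing only by trailing zeros in positions $n+1,\dots,s$; since $\prec$ ranks equal-length compositions by (inverse) lexicographic order, a first discrepancy can only occur among the first $n$ coordinates, so the outcome of $\blacktriangleleft$ is exactly the one obtained in $\mathcal{T}^n_n$. For the relation, the inclusion $\simc^{(n)}\subseteq\simc^{(s)}$ is clear because every element of $S_n$ extends to $S_s$. For the converse I would use the $\mathbb{B}\rtimes\mathbb{S}$-action to write any equivalence as $T'=T\beta\sigma$ with $\beta=(\beta_1,\dots,\beta_n)\in(S_s)^n$; only the restriction of each $\beta_i$ to the at most $n$ letters occurring in the relevant column affects $T'$, and both that column of $T$ and its image in $T'$ lie in $\{\alpha_1,\dots,\alpha_n\}$, so each $\beta_i$ can be replaced by an extension in $S_n$ without changing $T'$. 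Hence $T\simc^{(s)}T'$ with both tableaux over $\{\alpha_1,\dots,\alpha_n\}$ already gives $T\simc^{(n)}T'$.

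Finally I would assemble the pieces. A Cantorian minimal reduced tableau of $\mathcal{T}^s_n$ is a tableau over $\{\alpha_1,\dots,\alpha_n\}$ by the first step, and by the second it is the $\blacktriangleleft$-minimum of the very same $\simc$-class computed inside $\mathcal{T}^n_n$, hence a minimal reduced tableau there; the reverse inclusion is symmetric. The Cantorian property transfers both ways because $\Perm(T)$ and the row-word set $L$ are determined by the entries of $T$ alone, so $\Perm(T)\cap L=\emptyset$ is independent of the ambient alphabet. This establishes the asserted equality of the two families. The one place that requires genuine care is the converse inclusion $\simc^{(s)}\subseteq\simc^{(n)}$ above: I must ensure that an equivalence realized with bijections from $S_s$ can always be rewritten using only $S_n$, i.e.\ that excursions through the surplus letters $\alpha_{n+1},\dots,\alpha_s$ are never necessary.
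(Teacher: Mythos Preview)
Your argument is correct and in fact considerably more careful than the paper's own proof, which consists of a single observation: since each column of an $n\times n$ tableau contains at most $n$ distinct letters, one can apply a bijection of $A$ to each column so that only the letters $\alpha_1,\dots,\alpha_n$ remain; hence every $\simc$-class meets $\mathcal{T}^n_n$, and therefore ``every class representative has at most $n$ letters.'' The paper stops there.

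Your route differs in two respects. First, instead of producing an equivalent tableau over the small alphabet, you observe directly that \emph{reduced} already forces the small alphabet: the condition $\mathfrak{P}_T=\Inc_\preceq\circ\pi_n^n(\mathfrak{P}_T)$ makes every column's Parikh composition a partition of weight $n$, so its nonzero parts sit in positions $1,\dots,\ell^+\le n$. This is a neat shortcut the paper does not take. Second, you then verify explicitly that both $\blacktriangleleft$ and $\simc$ are insensitive to the unused letters $\alpha_{n+1},\dots,\alpha_s$, including the only genuinely delicate point---that an $(S_s)^n$-bijection linking two tableaux over $\{\alpha_1,\dots,\alpha_n\}$ can be replaced by one in $(S_n)^n$. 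The paper's proof leaves both of these verifications implicit. Your version therefore actually establishes the stated \emph{equality} of the two families of minimal reduced forms, whereas the paper's argument, read literally, only shows that the $\mathcal{T}^s_n$ representatives lie in $\mathcal{T}^n_n$ and relies on the reader to supply the rest.
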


\begin{proof}
Let~$c_i$ be the~$i$th column of a tableau~$T$. The maximal number of distinct letters that  may appear in~$c_i$ is~$n$. Using a bijection on~$c_i$, we can always obtain a new column-word~$c'_i$ using only the first~$n$ letters of $A$. Applying  this process to all column-words of~$T$, we obtain a new tableau~$T'$ equivalent to the first one using at most~$n$ distinct letters. Thus, every class representative has at most~$n$ letters.
\end{proof}

Table 1 lists the number of Cantorian minimal representatives and the number of tested tableaux obtained experimentally, thanks to the implementation of our method in the computer algebra system Sage \cite{sage}.
{\footnotesize
\begin{table}[h!t!b]
\begin{tabular}{c||c|c|c|c|c|c}\label{tab_nb_rep}
$n \backslash s$ & 2 & 3 & 4 & 5 & 6 &~$\cdots$ \\ \hline\hline 2 & 1/1 & 1/1 & 1/1 & 1/1 & 1/1 &~$\cdots$ \\ \hline 3 & 1/3 & 5/9 & 5/9 & 5/9 & 5/9 &~$\cdots$ \\ \hline 4 & 6/21 & 56/171 & 107/275 & 107/275 & 107/275 &~$\cdots$ \\ \hline 5 & 11/165 & 1873/12574 &  &  &  & 
\end{tabular}
%
%
\vspace{4pt}
\caption{Number of Cantorian classes of size~$n\times n$ on $s$ letters.}
\end{table}}

Using the cardinality formula given by Equation \eqref{eq:card_class} in Theorem \ref{thm:card_class}, we are able to extend the enumerative results listed in  Table 2  in \cite{BMRR}.

\begin{table}[h!t!b]\label{tab:can}
\tiny
\begin{tabular}{c||r|r|r|r|r|}
$n \backslash s$ & $2$ & $3$ & $4$ & $5$ &$\cdots$\\ \hline\hline 2 &~$1\cdot 2^2$ &~$2^2\cdot 3^2$ &~$3^2\cdot 4^2$ &~$4^2\cdot 5^2$&$\cdots$ \\ \hline 3 &~$3\cdot 2^3$ &~$47\cdot 2^2\cdot 3^3$ &~$207 \cdot 3^2\cdot 4^3$ &~$579\cdot 4^2\cdot 5^3$&$\bullet\bullet\bullet$  \\ \hline 4 &~$109\cdot 2^4$ &~$25036 \cdot2^2\cdot 3^4$ &~$\mathbf{803613}\cdot 3^2\cdot 4^4$ &~$\mathbf{9419224}\cdot 4^2\cdot 5^4$&$\bullet\bullet\bullet$\\ \hline 5 &~$2765\cdot 2^5$ &~$\mathbf{16304200}\cdot2^2 \cdot 3^{5}$ &  & &\\ \hline 6 &~$324781\cdot 2^6$ &  &  & &  \\ \hline 7 &~$37304106\cdot 2^7$ &  &  & &\\ \hline 8 &~$13896810621\cdot 2^8$ &  &  & & \\ \hline 9 &~$5438767247337\cdot 2^9$ &  &  & & \\ \hline 10 &~$6889643951630251\cdot 2^{10}$ &  &  &  &\\ \hline 11 &~$8135113082369752094\cdot 2^{11}$ &  &  &  &\\
\end{tabular}
\vspace{4pt}
\caption{Number of Cantorian tableaux of size $n\times n$ on $s$ letters.}
\end{table}

Moreover, once the Cantorian minimal reduced forms of~$\mathcal{T}^n_n$ are computed,  we deduce a closed formula for the number of Cantorian tableaux of dimension~$n$ on an alphabet of~$s$ letters. The next proposition  extends Theorem 1 of \cite{MMF}.

\begin{prop}\label{prop:Cantor234}
The number $C(n,s)$ of Cantorian tableaux for~$n=2,3$ and~$4$ is given by the following polynomials
\begin{align}
C(2,s) &= s^{2} \cdot (s - 1)^{2};\label{C2s} \\ 
C(3,s) &= s^{3} \cdot (s - 1)^{2} \cdot (s^{4} + 2 s^{3} - 15 s^{2} + 16 s - 1); \label{C3s}\\ 
C(4,s) &= s^{4} \cdot (s - 1)^{2} \cdot (s^{10} + 2 s^{9} + 3 s^{8} - 92 s^{7} - 43 s^{6} + 1014 s^{5} \label{C4s} \\  & \quad- 449 s^{4} - 5680 s^{3} + 12045 s^{2} - 9406 s + 2629). \nonumber
\end{align}
\end{prop}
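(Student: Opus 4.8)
The plan is to express $C(n,s)$ as a finite sum of the class cardinalities furnished by Theorem \ref{thm:card_class}, and then to track how each summand depends on $s$. Since the Cantorian property is intrinsic to the entries of a tableau and $\simc$ partitions $\mathcal{T}^s_n$ into classes that are each entirely Cantorian or entirely non-Cantorian, we may write
\begin{equation*}
C(n,s)=\sum_{[T]}\#[T],
\end{equation*}
the sum ranging over the Cantorian classes of $\mathcal{T}^s_n$, each represented once by its minimal reduced tableau $T^\blacktriangleleft$. By Lemma \ref{lem:class}, as soon as $s\geq n$ this index set is independent of $s$: it is exactly the finite set of Cantorian minimal reduced forms of $\mathcal{T}^n_n$ (of which there are $1$, $5$ and $107$ for $n=2,3,4$, as recorded in Table 1). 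Thus for $s\geq n$ the problem reduces to summing the $s$-dependent expressions $\#[T]$ over a fixed finite list.

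Next I would argue that each such $\#[T]$ is a polynomial in $s$ and isolate its $s$-dependence. Inspecting \eqref{eq:card_class}, the data $(f_1,\dots,f_q)$, $(g_1,\dots,g_r)$, $\eta$ and $\vartheta$ attached to a fixed representative $T$ depend only on the combinatorial structure of $T$ and not on the ambient alphabet size: the multiplicities and $\eta$ involve only row and column permutations, while $\vartheta=|\mathcal{O}_\Psi(T)\cap\mathcal{O}_\Phi(T)|$ counts tableaux that are simultaneously row/column permutations and column-bijection images of $T$, hence use only the letters already occurring in $T$. Consequently the sole factor varying with $s$ is the numerator product
\begin{equation*}
\prod_{i=1}^n\frac{s!}{(s-\ell^+(\mathfrak{p}_{c_i}))!}=\prod_{i=1}^n\;\prod_{k=0}^{\ell^+(\mathfrak{p}_{c_i})-1}(s-k),
\end{equation*}
which is manifestly a polynomial in $s$. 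Hence $\#[T]$ is a fixed rational multiple of this product, and $C(n,s)$ is a polynomial in $s$ for $s\geq n$.

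To cover the remaining range $2\leq s<n$ with the same polynomial, I would observe that a representative $T$ having a column with $\ell^+(\mathfrak{p}_{c_i})>s$ distinct letters cannot be realised in $\mathcal{T}^s_n$; correspondingly its factor $\prod_{k=0}^{\ell^+(\mathfrak{p}_{c_i})-1}(s-k)$ contains the vanishing term $(s-s)$, so the polynomial $\#[T]$ evaluates to $0$. The representatives actually surviving in $\mathcal{T}^s_n$ use at most $s$ letters, and for these the formula coincides with the genuine class cardinality over an alphabet of size $s$. Therefore the single polynomial obtained by summing over the full list of $\mathcal{T}^n_n$-representatives computes $C(n,s)$ for every $s\geq 2$, which is what makes one closed form valid across the whole range (in particular it must reproduce the two-letter entries of Table 2, and indeed \eqref{C3s} gives $24$ at $s=2$).

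It then remains to carry out the explicit bookkeeping. For $n=2$ the single representative $R^s_2$ gives $\#[R^s_2]=s^2(s-1)^2$, which is \eqref{C2s}. For $n=3$ one sums the five polynomials attached to the representatives $R_1,\dots,R_5$ displayed above, and for $n=4$ the $107$ analogous polynomials; factoring the resulting totals yields \eqref{C3s} and \eqref{C4s} (the leading factor $s^{n}$ being immediate, since each of the $n$ columns forces at least one power of $s$). The main obstacle—already flagged in the remark following Theorem \ref{thm:card_class}—is the determination of the integers $\eta$ and $\vartheta$ for each representative, which we do not expect to admit a tractable closed form in general but which can be computed by direct inspection, here with the assistance of Sage \cite{sage}, for the finitely many representatives in each dimension. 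Verifying that $\vartheta$ is genuinely independent of $s$, as claimed above, is the one conceptual point requiring care; everything else is a finite, if lengthy, polynomial computation.
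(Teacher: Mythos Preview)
Your proposal is correct and follows essentially the same route as the paper: reduce via Lemma~\ref{lem:class} to the finite list of Cantorian minimal reduced forms in $\mathcal{T}^n_n$, compute each class size with the formula of Theorem~\ref{thm:card_class}, and sum (explicitly for $n=2,3$, with computer assistance for $n=4$). You are in fact more explicit than the paper in justifying why the denominator data $\eta,\vartheta$ are independent of $s$ and why the resulting polynomial remains valid for $2\leq s<n$; the paper leaves these points implicit and simply exhibits the five $n=3$ computations.
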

\begin{proof}
By Lemma~\ref{lem:class} and Theorem \ref{thm:card_class}, it suffices to compute the representatives for $s\leq n$. 
Equation \eqref{C2s} was already established  in \cite{MMF} (Theorem 1,  p. 332). Nevertheless, we provide an alternate and much simpler proof.
For $n=s=2$, there is only one Cantorian class represented by the tableau $R^s_2$.
The cardinality of the Cantorian class is obtained using equation \eqref{eq:card_class}. It remains to compute that $\eta=0$ and $\vartheta=2$ which then yields 
\begin{equation*}
\frac{(2)^2\prod_{i=1}^{2}\frac{s!}{(s-2)!}}{(2+ 0)2}=\prod_{i=1}^{2}\frac{s!}{(s-2)!}=s^2(s-1)^2.
\end{equation*}
For $n=3$ and $s\leq 3$, there are 5 different representatives denoted previously by $R_1,R_2,R_3,R_4$ and~$R_5$. Again, the cardinalities of Cantorian classes are obtained from equation \eqref{eq:card_class} by computing $\eta$ and $\vartheta$ for each representative:

\begin{align*}
|[R_1]| & =  \frac{(3!)^2\frac{s!s!s!}{(s-2)!(s-2)!(s-2)!}}{(2!\cdot 3!+ 0)1}= 3s^3(s-1)^3; \\
|[R_2]| & =  \frac{(3!)^2\frac{s!s!s!}{(s-2)!(s-2)!(s-3)!}}{(1\cdot 2!+ 0)2}= 9s^3(s-1)^3(s-2); \\
|[R_3]| & =  \frac{(3!)^2\frac{s!s!s!}{(s-2)!(s-3)!(s-3)!}}{(1\cdot 2!+ 0)2}= 9s^3(s-1)^3(s-2)^2; \\
|[R_4]| & =  \frac{(3!)^2\frac{s!s!s!}{(s-1)!(s-3)!(s-3)!}}{(1\cdot 2!+ 0)6}= 3s^3(s-1)^2(s-2)^2; \\
|[R_5]| & =  \frac{(3!)^2\frac{s!s!s!}{(s-3)!(s-3)!(s-3)!}}{(1\cdot 3!+ 0)6}= \left(s(s-1)(s-2)\right)^3.
\end{align*}
The sum of these polynomials yields equation~\eqref{C3s}. For equation~\eqref{C4s}, the reader may proceed similarly with the 107 representatives  or use, for instance, the computer algebra system Sage \cite{sage}.
\end{proof}

\begin{remark*}
It would be interesting to exhibit  a general recursive construction: indeed, it amounts to determine the distinct minimal representatives of size $n\times n$ from its minors of size $(n-1)\times(n-1)$. Furthermore, in \cite{MMF}, it is shown that $C(n,s)\leq (s^n-s)^n$ which implies that if $C(n,s)=s^n(s-1)^2(s^{n^2-n-2}+a\cdot s^{n^2-n-3}+\cdots)$, then necessarily $a\leq 2$.
\end{remark*}

\section{Bi-Cantorian tableaux}\label{sec:BiC}

In Section 7 of \cite{BMRR}, the authors introduced the subclass of bi-Cantorian tableaux: 

\begin{definition}
 A tableau~$T\in\mathcal{T}^s_n$ is bi-Cantorian if~$\Perm(T)\cap(C\cup L)=\emptyset$.
\end{definition}
In other words, a tableau is bi-Cantorian if it is Cantorian and, moreover, if none of the column-words appear in its permanent. Taking into account that we can generate Cantorian tableaux using the representatives, we can now compute by brute force the bi-Cantorian tableaux. Table 3 lists the first results.
\begin{table}[h!t!b]\label{tab:bican}
\begin{small}
\begin{tabular}{c||c|c|c|c|c|c}
$n \backslash s$ & 2 & 3 & 4 & 5 & 6 &~$\cdots$ \\ \hline\hline 2 &~$1\cdot 2\cdot 1$ &~$2\cdot 3\cdot 3$ &~$3\cdot 4\cdot 7$ &~$4\cdot 5\cdot 13$ &~$5\cdot 6\cdot 21$ &~$\cdots$ \\ \hline 3 &~$1\cdot 2\cdot 3$ &~$2\cdot 3 \cdot 367$ &~$3\cdot 4\cdot 6179$ &~$4\cdot 5\cdot 43065$ &  &  \\ \hline 4 &~$1\cdot 2\cdot 91$ &~$2\cdot 3 \cdot 402873$ &  &  &  &  \\ \hline 5 &~$1\cdot 2\cdot 2005$ &  &  &  &  & \\
\end{tabular}
\end{small}
\vspace{4pt}
\caption{Number of bi-Cantorian tableaux of size $n\times n$ on $s$ letters.}
\end{table}
The property of being bi-Cantorian is not invariant under the relation~$\simc$. Besides this fact, observe
that given a permutation~$\sigma$ of~$S_n$, the property of being bi-Cantorian is invariant under the action of~$\sigma$ on both the rows and columns. It is  also invariant under the action of pairs $(\sigma,\tau)\in \mathbb{S}$ such that $ \sigma\cdot C\cap \Perm(T\cdot \tau)=\emptyset$, and of course,  by any bijection on $A$ as well. This defines a new equivalence relation~$\simb$ on the set~$\mathcal{T}^ s_n$.

Given the relation~$\simb$, for dimension~$n=2$, there are three bi-Cantorian classes:
\[
\begin{bmatrix}
                          1 & 2 \\
                          2 & 1 \\
                         \end{bmatrix},
\begin{bmatrix}
                          1 & 2 \\
                          2 & 3 \\
                         \end{bmatrix},
\begin{bmatrix}
                          1 & 2 \\
                          3 & 4 \\
                        \end{bmatrix};
\]
while for~$n=3$ and~$s=2$, there is only one class represented by the tableau 
\[\begin{bmatrix}
                          1 & 1 & 2 \\
                          1 & 1 & 2 \\
                          2 & 2 & 1 \\
                         \end{bmatrix} .
\]
For~$n=3$ and~$s=3$, there are 32 classes and for $s=4$, there are 173 classes. 

The choice for a bi-Cantorian class representative is not as clear as for Cantorian classes because of the rather complicated relation~$\simb$. In trying to understand bi-Cantorian tableaux, we relate this classification with some coloring of a graph.

Let~$K(s)$ be the set of~$s$-colored cycle graphs on 4 vertices (labeled~$v_1,v_2,v_3,v_4$ clockwise) such that no edge has the same color on both of its vertices. Moreover, let~$B(s)$ be the set of 2$\times$2 bi-Cantorian tableaux on~$s$ letters.

\begin{prop}
  The function  $ \psi :  B(s)  \rightarrow  K(s) $ defined by
\begin{equation*}   \begin{bmatrix}
                          \alpha_{1,1} & \alpha_{1,2} \\
                          \alpha_{2,1} & \alpha_{2,2} \\
                         \end{bmatrix} \mapsto  \{(v_1,\alpha_{1,1}), (v_2,\alpha_{1,2}), (v_3,\alpha_{2,2}), (v_4,\alpha_{2,1})\} \\
  \end{equation*}
  is a bijection between~$B(s)$ and~$K(s)$. In particular, we have
  \[|K(s)|=|B(s)|=2\binom{s}{2}+12\binom{s}{3}+24\binom{s}{4}=s(s-1)(s^2-3s+3),\] and the $3$ bi-Cantorian representatives give the non-isomorphic colorings of the cycle graph.
\end{prop}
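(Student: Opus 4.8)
The plan is to establish the claimed bijection $\psi : B(s) \to K(s)$ directly, and then to count $K(s)$ by a case analysis on the number of distinct colors used. First I would verify that $\psi$ is well-defined, i.e.\ that it lands in $K(s)$. A $2\times 2$ tableau is bi-Cantorian precisely when no row-word and no column-word appears in its permanent $\Perm(T) = \{\alpha_{1,1}\alpha_{2,2},\ \alpha_{2,1}\alpha_{1,2}\}$. A short computation shows that the forbidden coincidences are exactly the equalities $\alpha_{1,1}=\alpha_{1,2}$, $\alpha_{2,1}=\alpha_{2,2}$, $\alpha_{1,1}=\alpha_{2,1}$, and $\alpha_{1,2}=\alpha_{2,2}$ (the conditions ruling out each row-word and each column-word from being a diagonal). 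Under the clockwise labeling $v_1\mapsto\alpha_{1,1}$, $v_2\mapsto\alpha_{1,2}$, $v_3\mapsto\alpha_{2,2}$, $v_4\mapsto\alpha_{2,1}$, these four inequalities are precisely the four edge constraints of the $4$-cycle $v_1v_2v_3v_4$, so $\psi(T)$ is a properly $2$-colored cycle, confirming $\psi(T)\in K(s)$.

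Next I would show $\psi$ is a bijection. It is manifestly injective, since the four entries of $T$ are recovered verbatim from the four vertex colors. For surjectivity, given any $s$-coloring of the cycle with no monochromatic edge, I read off a $2\times 2$ tableau via the inverse assignment; the proper-coloring condition gives back exactly the four inequalities above, which I would note are both necessary and sufficient for the tableau to be bi-Cantorian (this is the Cantorian condition from the general setup specialized to $n=2$, together with the analogous column condition). Hence $\psi$ is a bijection and $|B(s)|=|K(s)|$.

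For the enumeration I would count proper $s$-colorings of the $4$-cycle stratified by the number $k\in\{2,3,4\}$ of distinct colors actually used. A proper coloring of $C_4$ must use at least two colors, and I would enumerate the coloring \emph{patterns} on a fixed palette for each $k$: with $2$ colors the only proper pattern is the alternating one (giving the $2\binom{s}{2}$ term, the factor $2$ accounting for the two ways to assign the chosen pair to the two color-classes); with exactly $3$ colors one counts the proper patterns using all three colors (yielding the coefficient $12$ in $12\binom{s}{3}$); and with all $4$ colors one counts the proper surjective patterns (the $24\binom{s}{4}$ term). I would obtain these pattern-counts either by direct inspection of $C_4$ or as a check against the chromatic polynomial $P(C_4,s)=(s-1)^4+(s-1)$, whose expansion in the binomial basis must reproduce $2\binom{s}{2}+12\binom{s}{3}+24\binom{s}{4}$; a routine simplification then gives the closed form $s(s-1)(s^2-3s+3)$.

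The main obstacle is the combinatorial bookkeeping in the stratified count: correctly identifying the number of proper coloring patterns of $C_4$ using exactly $k$ colors, and in particular getting the coefficient $12$ for the three-color case right (distinguishing which pair of vertices shares the repeated color and ordering the remaining two colors). I would guard against error by cross-checking the binomial-coefficient sum against the chromatic polynomial $P(C_4,s)$ evaluated at small $s$. Finally, to match the three listed bi-Cantorian representatives with the non-isomorphic colorings, I would observe that the three stratification classes $k=2,3,4$ correspond exactly to the three displayed tableaux $\left[\begin{smallmatrix}1&2\\2&1\end{smallmatrix}\right]$, $\left[\begin{smallmatrix}1&2\\2&3\end{smallmatrix}\right]$, $\left[\begin{smallmatrix}1&2\\3&4\end{smallmatrix}\right]$, which use two, three, and four distinct letters respectively, and these are pairwise non-isomorphic as colored cycles since the number of colors used is a coloring invariant.
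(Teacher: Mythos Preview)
Your proposal is correct and follows essentially the same route as the paper: verify that $\psi$ lands in $K(s)$, check injectivity and surjectivity, and then count $K(s)$ by stratifying on the number $k\in\{2,3,4\}$ of colors actually used. The only cosmetic differences are that the paper argues well-definedness by contradiction via a monochromatic edge forcing a constant word $\alpha^2$ in $C\cup L$, whereas you extract the four inequalities directly; and you add a chromatic-polynomial cross-check that the paper omits.
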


\begin{proof}
First, we prove that the image of a bi-Cantorian tableau gives a proper coloring of the 4-cycle. So, consider a bi-Cantorian tableau~$B$ and its image $\{(v_1,\alpha_{1,1}), (v_2,\alpha_{1,2}), (v_3,\alpha_{2,2}), (v_4,\alpha_{2,1})\}$. If the associated colored 4-cycle has an edge which is monochromatic with color~$\alpha\in A$, this means that the bi-Cantorian tableau~$B$ has the word~$\alpha^2$ in the set~$C\cup L$. Now, suppose that~$\alpha^2$ is a row-word of~$B$. The diagonal of~$B$ actually appears as the first or second column-word of~$B$, meaning that~$B$ is not bi-Cantorian, which is a contradiction. If~$\alpha^2$ is a column-word, the same argument applies verbatim by interchanging rows and columns.
The injectivity of this function is easily verified. It remains  to check that~$\psi$ is surjective. Given a proper coloring of the 4-cycle~$K=\{(v_1,\beta_1), (v_2,\beta_{2}), (v_3,\beta_3), (v_4,\beta_4)\}$, with~$\beta_{i=1,\dots,4}\in A$, the inverse image gives the tableau
\[ \psi^{-1}(K)=\begin{bmatrix}
                          \beta_1 & \beta_2 \\
                          \beta_4 & \beta_3 
                         \end{bmatrix}.
\]
  To  check that this tableau is indeed bi-Cantorian, we proceed by contradiction and assume that the tableau is not bi-Cantorian. Then, there is a column-word or row-word~$w$ which is either equal to the diagonal of~$\psi^{-1}(K)$ or its secondary diagonal. Suppose that~$w$ is equal to the main diagonal-word~$\beta_1\beta_3$, since~$\beta_2$ and~$\beta_4$ are not equal to~$\beta_1$ nor to~$\beta_3$,~$w$ cannot be equal to the second column or second row. The same holds for the first row-word and first column-word. Finally, one can see that if~$w$ is equal to the secondary diagonal, the same arguments hold and lead to a contradiction, forcing the tableau to be bi-Cantorian.
                         
To get the cardinality of~$K(s)$, one can easily compute the number of distinct colorings by  using 2, 3 and 4 distinct colors, which are 2, 12 and 24 respectively. Finally, starting with~$s$ colors, it remains to count the number of ways to choose 2, 3 and 4 distinct colors within~$s$ colors.
\end{proof}

Looking at the enumeration of bi-Cantorian tableaux, some new questions arise. Denote by~$B(n,s)$ the number of~$n\times n$ bi-Cantorian tableaux on~$s$ letters and by~$C(n,s)$ the number of~$n\times n$ Cantorian tableaux on~$s$ letters. First, in order to obtain asymptotic results similar to Section $5$ of \cite{BMRR}, one should find a sufficient condition for  not being bi-Cantorian. 




Naturally, we can also ask what is the ratio of bi-Cantorian tableaux to Cantorian tableaux, and whether the limit as $n\rightarrow \infty$ exists, if so what is its value?
Using the data from Table 2 and Table 3 with ~$s=2$,  the first values of this ratio are
$$\frac{B(n,2)}{C(n,2)}= 0.5, 0.25, 0.104, 0.045$$ for respectively~$n=2,3,4,5$. This strongly suggests that the ratio tends to zero.  In the next section, we attempt  to answer this question by using an alternate approach.

\section{Variations and extensions of Cantorian tableaux}\label{sec:Variations}

Consider an infinite tableau formed by listing the algebraic numbers in some base~$s$ in rows. Are there algebraic columns? If yes, how many? In this section, we give results that would suggest the right answer is no with high probability and discuss further extensions of Cantorian tableaux.

Consider the infinite tableau~$T^{\infty}$ formed by row-words in the set~$L^{\infty}=A^\star \alpha_1^{\omega} \cup A^\star \alpha_2^{\omega} \cup \cdots \cup A^\star \alpha_s^{\omega}$. Every row in~$T^ {\infty}$ finishes with a tail of~$\alpha_1$ or a tail of~$\alpha_2$, etc. In other words, for every row-word~$\ell$, we have
\begin{align*}
|\ell|_{\alpha_1}&=\infty,  |\ell|_{\alpha_2}<\infty,  |\ell|_{\alpha_3}<\infty, \text{ etc.} \\
\text{or } |\ell|_{\alpha_2}&=\infty,  |\ell|_{\alpha_1}<\infty,  |\ell|_{\alpha_3}<\infty, \text{ etc.} \\
\text{or } |\ell|_{\alpha_3}&=\infty,  |\ell|_{\alpha_1}<\infty,  |\ell|_{\alpha_2}<\infty, \text{ etc.} \\
 & \dots & \\
\text{or } |\ell|_{\alpha_s}&=\infty,  |\ell|_{\alpha_1}<\infty,  |\ell|_{\alpha_2}<\infty, \text{ etc.} 
\end{align*}

\begin{theorem}\label{thm:biCant}
The tableau~$T^{\infty}$ is Cantorian. Moreover, if~$s=2$, then it is not bi-Cantorian.
\end{theorem}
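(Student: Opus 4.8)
The plan is to reduce both assertions to a single question: which infinite words belong to $\Perm(T^{\infty})$? Observe that a word $w$ over $A$ lies in $\Perm(T^{\infty})$ if and only if there is a bijection $\pi$ of $\nat$ with $T^{\infty}(\pi(i),i)=w_i$ for every $i$; equivalently, writing $\sigma=\pi^{-1}$, one must injectively assign to each row $r$ a distinct column $\sigma(r)$ at which $r$ \emph{agrees} with $w$ (that is, $T^{\infty}(r,\sigma(r))=w_{\sigma(r)}$), so that every column is used exactly once. Since the row-word set $L$ equals $L^{\infty}$, which is precisely the set of eventually-constant sequences, the Cantorian claim is exactly the statement that no eventually-constant word lies in $\Perm(T^{\infty})$, and I would isolate this as the key lemma.

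For the Cantorian part I would argue by a pigeonhole obstruction. Suppose $w$ has tail $\alpha_k$, and let $N$ be minimal with $w_i=\alpha_k$ for all $i\ge N$. Fix any $m\ne k$ (possible since $s\ge 2$) and consider the $N$ distinct rows $r_j=\alpha_k^{\,j-1}\alpha_m^{\omega}$ for $j=1,\dots,N$. At every column $i\ge N$ one has $i\ge j$, so the entry of $r_j$ there is $\alpha_m\ne\alpha_k=w_i$; hence each $r_j$ disagrees with $w$ at all columns $\ge N$. If $w\in\Perm(T^{\infty})$ via some bijection, each $r_j$ must therefore be assigned a column in $\{1,\dots,N-1\}$, and these assignments are pairwise distinct. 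This forces an injection of $N$ rows into $N-1$ columns, which is impossible; hence $w\notin\Perm(T^{\infty})$, proving $\Perm(T^{\infty})\cap L=\emptyset$.

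For $s=2$ I would prove the converse membership criterion and then read off the columns. First, every word $w$ in which both letters occur infinitely often belongs to $\Perm(T^{\infty})$: build the required bijection by a back-and-forth construction over a fixed enumeration of $L^{\infty}$, alternately covering the next column and the next unmatched row. Covering a column $i$ is possible because infinitely many rows carry the prescribed letter $w_i$ at position $i$ while only finitely many have been used; covering a row $\rho$ with tail $b$ is possible because, $w$ containing $b$ infinitely often, $\rho$ agrees with $w$ at infinitely many columns, again with only finitely many used. Second, for $s=2$, at each fixed position both letters occur in infinitely many rows of $L^{\infty}$, so every column-word of $T^{\infty}$ contains both letters infinitely often and is thus realizable as a diagonal. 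Consequently $C\subseteq\Perm(T^{\infty})$, so $\Perm(T^{\infty})\cap(C\cup L)\supseteq\Perm(T^{\infty})\cap C\neq\emptyset$ and $T^{\infty}$ is not bi-Cantorian.

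The main obstacle is the infinite back-and-forth: one must verify that the limit of the finite partial matchings is a genuine bijection that simultaneously covers every column and exhausts every row, the two alternating steps being exactly what guarantees this. It is also here that the hypothesis $s=2$ is essential — the row-covering step needs each eventually-constant row's tail letter to reappear infinitely often in $w$, and for two letters this is equivalent to $w$ not being eventually constant, whereas for $s>2$ a non-eventually-constant column could omit some letter after finitely many positions and the construction would stall, which is why the non-bi-Cantorian conclusion is stated only for $s=2$.
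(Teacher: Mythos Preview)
Your proof is correct, but it takes a different route from the paper's.

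For the Cantorian part, the paper does not use a pigeonhole obstruction. Instead it shows the stronger fact that \emph{every} $p\in\Perm(T^{\infty})$ contains each letter of $A$ infinitely often (for arbitrary $s$), by applying a fixed-point-free morphism $\mu$ on $A$ and invoking Theorem~3 of \cite{BMRR} to the transformed tableau $\mu T^{\infty}$. Your argument is more elementary and entirely self-contained: you exhibit, for a word with tail $\alpha_k$ starting at position $N$, the $N$ rows $\alpha_k^{\,j-1}\alpha_m^{\omega}$ that all must land in the $N-1$ columns $\{1,\dots,N-1\}$, which is impossible. This proves exactly what is needed and nothing more; the paper's approach is shorter only because it leans on an external result, and it gives the extra information that permanent-words are ``rich'' in every letter.

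For the non-bi-Cantorian part, the paper simply quotes Theorem~11 of \cite{BMRR}, which identifies $\Perm(T^{\infty})$ (for $s=2$) with the set of words containing both letters infinitely often, and then observes that every column-word lies in that set. You instead reprove the needed inclusion by a back-and-forth construction of the matching. Your construction is correct---the alternation guarantees surjectivity on both sides, and the two availability claims (infinitely many rows with a prescribed letter at a prescribed position; infinitely many positions where a given eventually-$b$ row agrees with $w$) are exactly right. Again the trade-off is self-containment versus brevity: your argument stands on its own, while the paper's is a one-line citation. Your closing remark on why $s=2$ is essential for the row-covering step is also accurate and worth keeping.
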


\begin{proof}
First, we prove that every element~$p\in \Perm(T^{\infty})$ contains infinitely many times each letter~$\alpha\in A$. To prove this, we adapt the proof of Theorem 10 in~\cite{BMRR}. By contradiction, assume that $|p|_a<\infty$  for some $a\in A$. Then, let $\mu$ be the morphism without fixed point defined by $\mu(b)=a$ for all $b\not =a$, and $\mu(a)=c$ for some $c\not = a$. By Theorem 3 in \cite{BMRR}, we have $\Perm(\mu T^{\infty})\cap L^{\infty}=\emptyset$, which implies that ~$\mu(p)$ contains infinitely many occurrences of letters $a$ and $c$. But from some index on, $p$ does not contain the letter $a$, so that $\mu(p)$ has an infinite tail of $a$'s. Contradiction. We can therefore say that~$L^{\infty}\cap\Perm(T^{\infty})=\emptyset$ and hence~$T^{\infty}$ is Cantorian.

One can see that there are infinitely many rows beginning with~$\alpha_1$, infinitely many rows beginning with~$\alpha_2$, etc. Thus, the first column contains infinitely many times each letter in~$A$. This property is true for every column of~$T^{\infty}$. If we suppose that~$s=2$, then Theorem 11 in \cite{BMRR} says that~$\Perm(T^{\infty})=[0,1]\setminus L^{\infty}=\{\text{words with infinitely many } \alpha_1\text{ and }\alpha_2\}$. This latter set is exactly the set~$C^{\omega}$ of column-words of~$T^{\infty}$. Therefore,~$\Perm(T^{\infty})\cap C^{\omega}=C^{\omega}\neq\emptyset$ and hence~$T^{\infty}$ is not bi-Cantorian.
\end{proof}

In the statement of Theorem 10 in \cite{BMRR}, it is assumed that the set $L$ of row-words  contains the rational numbers~$\rat$. The above tableau~$T^{\infty}$, with~$s=2$, is the same as the one containing the binary expansions of the numbers~$k/2^n$, with~$k,n\in \nat$ and~$0\leq k/2^n<1$, with the convention that every number should appear twice, once with a tail of 0's and once with a tail of 1's.
Our last proof implies that a more general statement of this theorem is true, namely:

\begin{corollary} Let  the set $L$ of row-words  of $T^{\infty}$ be such that $L^{\infty}\cap [0,1] \subseteq  L\subseteq [0,1]$.   Then,  $T^{\infty}$ is Cantorian.
\end{corollary}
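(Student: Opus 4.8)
The plan is to observe that the proof of the Cantorian half of Theorem~\ref{thm:biCant} nowhere uses that the rows of $T^{\infty}$ are exactly $L^{\infty}$; it uses only that the row-set contains every eventually-constant word, i.e. the inclusion $L^{\infty}\cap[0,1]\subseteq L$. So I would simply re-run that argument for the enlarged tableau and check that each appeal to \cite{BMRR} still applies.

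Concretely, I would first fix $p\in\Perm(T^{\infty})$ and reprove that $p$ contains every letter of $A$ infinitely often. Supposing $|p|_a<\infty$ for some $a\in A$, I take the fixed-point-free morphism $\mu$ onto two letters ($\mu(b)=a$ for $b\neq a$ and $\mu(a)=c$), so that $\mu(p)\in\Perm(\mu T^{\infty})$. The key point is that, because $L\supseteq L^{\infty}\cap[0,1]$, the tableau $\mu T^{\infty}$ still carries every eventually-constant word over $\{a,c\}$ as a row; hence Theorem~3 of \cite{BMRR} gives $\Perm(\mu T^{\infty})\cap L^{\infty}=\emptyset$, forcing $\mu(p)$ to be non-eventually-constant. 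This contradicts the $a$-free tail of $p$, which (since every letter $\neq a$ is sent to $a$) produces a tail of $a$'s in $\mu(p)$. This is the structural core and the only step that consumes the hypothesis on $L$. From it I conclude that no $p\in\Perm(T^{\infty})$ is eventually constant, so $\Perm(T^{\infty})\cap(L^{\infty}\cap[0,1])=\emptyset$.

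The remaining, and main, obstacle is to upgrade this to $\Perm(T^{\infty})\cap L=\emptyset$, since $L\subseteq[0,1]$ may contain words in which every letter already occurs infinitely often, which the morphism argument does not exclude. Here I would appeal to the bijection constraint hidden in the permanent: to realize a target word $w$ as a generalized diagonal one needs a bijection of the rows placing every row, yet any row disagreeing with $w$ in every position can never be placed. For an eventually-constant $w$ the position-wise complement is again eventually constant and so lies in $L^{\infty}\cap[0,1]\subseteq L$, which blocks the matching; making this obstruction work for the non-eventually-constant members of $L$—and confirming that the reduction to $\mu T^{\infty}$ together with the cited two-letter characterization survive the enlargement of the row-set—is exactly where the proof must be argued with care.
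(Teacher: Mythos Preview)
Your first two paragraphs match the paper exactly: the paper gives no separate proof of the corollary and simply asserts that ``our last proof implies'' it, i.e.\ that the morphism argument in the proof of Theorem~\ref{thm:biCant} uses only the inclusion $L^{\infty}\subseteq L$ and not the equality. You have reproduced precisely that observation.

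Your third paragraph, however, goes beyond what the paper does and in fact flags a genuine issue in the paper's presentation. As you correctly point out, the displayed argument shows only that every $p\in\Perm(T^{\infty})$ contains each letter infinitely often, hence $\Perm(T^{\infty})\cap L^{\infty}=\emptyset$; it does \emph{not} by itself give $\Perm(T^{\infty})\cap L=\emptyset$ when $L$ properly contains $L^{\infty}$. The paper does not address this gap---it is tacitly leaning on the full proof of Theorem~10 in \cite{BMRR}, which it never reproduces, and only recording that the hypothesis $\rat\subseteq L$ there may be weakened to $L^{\infty}\subseteq L$. Your own attempt to close the gap does not succeed either: the ``blocking row'' idea (find a row disagreeing with the target $w$ in every position) works for eventually-constant $w$ but, as you concede, you have no argument for the non-eventually-constant rows of $L$, and the final sentence (``is exactly where the proof must be argued with care'') is an acknowledgment that the step is missing rather than a proof of it. So while your diagnosis is sharper than the paper's, your proposal does not actually establish the corollary as stated; it establishes exactly what the paper's one-line justification establishes, namely $\Perm(T^{\infty})\cap L^{\infty}=\emptyset$.
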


\begin{remark*}
 In view of Theorem 13 in \cite{BMRR}, Theorem \ref{thm:biCant} above suggests that there is a very small probability for a column-word to be algebraic.
\end{remark*}

When discussing infinite tableaux in  previous articles, Brlek  \emph{et al.}  \cite{BMRR} and Mend\`es  France \cite{MMF} were mostly concerned with real numbers and their expansion in basis~$s\geq 2$. Other kind of expansions may be interesting to look at, in particular continued fractions. We discuss for the remaining of this section the very special case of irrational formal power series $\sum_{n\geq 1} \frac{\alpha_n}{x^n}$ over the field $\mathbb{F}_2$ with two elements 0 and 1. It is well known that these series have a continued fraction representation $[0,A_1(x),A_2(x), \dots]$ where for all~$j\geq 1$,~$A_j(x)$ are polynomials of degrees $\geq 1$. In a remarkable paper \cite{BS}, Baum and Sweet study those continued fractions where for all~$j$, the degree of~$A_j(x)$ is 1, i.e.~$[0, x+a_1, x+a_2, \dots ]$, with~$a_j=0 \text{ or }1$, which for short we call $BS$-elements. Baum and Sweet observe that there exist countably many algebraic $BS$-elements. We show:

\begin{prop}
Let~$T$ be the infinite tableau where the rows represent the partial quotients~$x+a^j_i$ of the family of algebraic~$BS$-elements. Then,~$T$ is Cantorian and more precisely $~\Perm (T)$ represents the family of all transcendental~$BS$-elements.
\end{prop}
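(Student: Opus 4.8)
My plan is to prove the equivalent statement $\Perm(T)=\{0,1\}^{\omega}\setminus L$, where I identify each $BS$-element with its sequence of parameters via $[0,x+a_1,x+a_2,\dots]\leftrightarrow(a_1,a_2,\dots)\in\{0,1\}^{\omega}$. Under this identification $T$ is an infinite tableau over the two-letter alphabet $A=\{x,x+1\}$ (so $s=2$), its row-set $L$ is the set of parameter sequences of the algebraic $BS$-elements, which is countable by the Baum--Sweet observation, and $\{0,1\}^{\omega}\setminus L$ is precisely the set of transcendental $BS$-elements. The equality then splits into the Cantorian inclusion $\Perm(T)\cap L=\emptyset$ and the surjectivity inclusion $\{0,1\}^{\omega}\setminus L\subseteq\Perm(T)$, and both rest on two closure properties of algebraicity over $\mathbb{F}_2(x)$.

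The first property is closure under \emph{complementation} $c\colon(a_1,a_2,\dots)\mapsto(a_1+1,a_2+1,\dots)$. The crucial observation is that $c$ corresponds, at the level of power series, to the substitution $x\mapsto x+1$, since each partial quotient becomes $(x+1)+a_j=x+(a_j+1)$. As $x\mapsto x+1$ is an $\mathbb{F}_2$-automorphism of $\mathbb{F}_2(x)$ that extends continuously to the completion $\mathbb{F}_2((x^{-1}))$, it preserves algebraicity; hence $c(L)=L$, and $c$ likewise preserves the transcendental class. The second property is closure under \emph{finite modification}: changing finitely many partial quotients applies a M\"obius transformation with coefficients in $\mathbb{F}_2[x]$ (a product of convergent matrices), which again preserves algebraicity. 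From the second property I would deduce that each column of $T$ contains both letters infinitely often, and that two parameter sequences lying in distinct modification classes must agree at infinitely many positions and also disagree at infinitely many positions.

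For the Cantorian inclusion, suppose for contradiction that some $t\in L$ is a diagonal, i.e.\ there is $\pi\in S_{\nat}$ with $a^j_{\pi(j)}=t_j$ for all $j$. By the first property its complement $c(t)$ again lies in $L$, hence is some row, say row $m$; as $\pi$ is a bijection, $m=\pi(j_0)$ for some column $j_0$, which forces $c(t)_{j_0}=a^{j_0}_{\pi(j_0)}=t_{j_0}$. But $c(t)$ differs from $t$ in \emph{every} position, a contradiction. Thus no row can appear as a diagonal and $T$ is Cantorian. Note that this complementation trick supplies exactly what the ``each letter occurs infinitely often'' argument behind Theorem~\ref{thm:biCant} cannot, since an algebraic $BS$-element may perfectly well contain each letter infinitely often.

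For the surjectivity inclusion, fix a transcendental $BS$-element $t$ and consider the bipartite graph joining column $j$ to row $i$ whenever $a^j_i=t_j$. A row $i$ meets only finitely many columns precisely when it eventually complements $t$, that is, lies in the modification class of $c(t)$; but $c(t)$ is transcendental, so that class contains no row, whence every row has infinite degree. Symmetrically every column has infinite degree, since each letter occurs infinitely often in it. A back-and-forth construction then yields a perfect matching, i.e.\ a bijection $\pi\in S_{\nat}$ realising $t$ as a diagonal, because at each finite stage only finitely many neighbours have been used while every relevant vertex still has infinitely many free ones; hence $t\in\Perm(T)$, and the two inclusions give the claim. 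The hardest part will be the rigorous justification of the two closure properties, especially making the correspondence ``complementation $\leftrightarrow$ substitution $x\mapsto x+1$'' precise on $\mathbb{F}_2((x^{-1}))$ and checking that the substituted continued fraction is still a $BS$-element with degree-one monic partial quotients; the infinite matching is comparatively routine, Hall's condition holding trivially once both sides have infinite degree.
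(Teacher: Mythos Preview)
Your argument is correct. The paper's own proof is a single sentence: after identifying $x+a_i^j$ with $a_i^j\in\{0,1\}$, it declares the result a ``trivial corollary of Theorem~11 of \cite{BMRR}'' and stops. You make the same identification with a binary alphabet but then, instead of invoking that black box, you verify directly the two structural properties of the row-set $L$ that drive such results---closure under complementation and under finite modification---and run the Cantor-diagonal and back-and-forth matching arguments by hand.

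What you gain is a genuinely self-contained proof, and in particular the observation that complementation of the parameter sequence is induced by the field automorphism $x\mapsto x+1$ of $\mathbb{F}_2(x)$ is a clean explanation of \emph{why} the algebraic $BS$-elements are closed under it; the paper leaves this implicit in its citation. Your remark that the ``infinitely many of each letter'' mechanism behind Theorem~\ref{thm:biCant} does not suffice here is also to the point. What the paper's route buys is brevity: once Theorem~11 of \cite{BMRR} is on the shelf (it gives $\Perm(T)=\{0,1\}^\omega\setminus L$ under mild hypotheses on $L$ when $s=2$), there is nothing left to do. Your proof is essentially an unpacking of that theorem in this instance, with the hypotheses checked via the algebraic structure of $\mathbb{F}_2((x^{-1}))$.
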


\begin{proof}
Identify~$x+a^j_i$ with~$a^j_i$. The proposition is then a trivial corollary of Theorem 11 of \cite{BMRR}.
\end{proof}

\section{Cantorian-type tableaux and colored hypergraphs}\label{sec:HyperGraphs}

In this section, we link the study of Cantorian tableaux to colored hypergraphs with a square number of vertices. A hypergraph~$H$ is a pair~$(V,B)$ consisting of a \emph{vertex} set~$V$ and a family~$B$ of subsets of $V$, called \emph{blocks}. A hypergraph is \emph{regular} if each vertex of~$V$ appears in the same number of blocks. Also, a hypergraph is \emph{uniform} if every block in~$B$ contains the same number of vertices. Two hypergraphs are \emph{isomorphic} if there is a bijection between their vertex set  preserving the blocks.

We now build a  hypergraph on~$n^2$ vertices. Set~$V=\{v_{ij}| 1\leq i,j \leq n\}$ for the vertex set. The block family consists of two distinct sets of blocks: the first one is $L=\left\{ \{v_{ij}|1\leq j\leq n\}| 1\leq i\leq n \right\}$, called \emph{row blocks}; the second block-set is~$P=\left\{ \{v_{\pi(i)i}| 1\leq i\leq n\}  |\pi\in S_n\right\}$, called  \emph{diagonal blocks}. We set~$B=L\cup P$. Thus,~$H=(V,B)$ is a $((n-1)!+1)$-regular and~$n$-uniform hypergraph. Each block of this hypergraph has a natural linear ordering of its vertices according to the second index. A vertex coloring~$\chi$ of~$H$ is a map from~$V$ to a color set~$A$, with~$|A|=s$. Such a coloring~$\chi$ of~$H$ is \emph{intersecting} if there exists a sequence of colors~$(\alpha_1,\dots,\alpha_n)$, with~$\alpha_i\in A$, that appears both in a block of $L$ and in a block of~$P$. 
A non-intersecting colored hypergraph is called for obvious reasons  \emph{Cantorian} since it translates literally to a tableau. Indeed,  if $\chi$ and $\chi'$ are two vertex colorings of $H$,  they are  \emph{isomorphic} if  there exists a bijection $\lambda:\chi(L)\cup\chi(P) \longrightarrow\chi'(L)\cup\chi'(P)$ such that its restriction $\lambda:\chi(L)\cap\chi(P) \longrightarrow\chi'(L)\cap\chi'(P)$ is also a bijection. Clearly, such a $\lambda$ leaves the Cantorian property invariant.

\begin{prop}
If two tableaux are $\simc$-equivalent, then their corresponding colored hypergraphs are isomorphic.
\end{prop}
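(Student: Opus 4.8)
The plan is to translate the hypergraph language back into tableaux and to show that each of the three generators of $\simc$ induces a single bijection of the set $A^n$ of length-$n$ words over $A$ that simultaneously carries the set of row-words onto the set of row-words and the permanent onto the permanent of the image tableau.

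First I would set up the dictionary. Writing $T=(a_i^j)$ for the tableau associated to the coloring $\chi$ via $\chi(v_{ij})=a_i^j$, the linear order of each block by its second index identifies $\chi(L)$ with the set of row-words of $T$ and $\chi(P)$ with $\Perm(T)$: the diagonal block indexed by $\pi$ reads off $a^1_{\pi(1)}a^2_{\pi(2)}\cdots a^n_{\pi(n)}$. Hence the asserted isomorphism of colorings reduces to exhibiting a bijection $\lambda\colon A^n\to A^n$ with $\lambda(\chi(L))=\chi'(L)$ and $\lambda(\chi(P))=\chi'(P)$. Such a $\lambda$ automatically satisfies the definition given just before the proposition: being a bijection, it sends intersections to intersections, so $\lambda(\chi(L)\cap\chi(P))=\chi'(L)\cap\chi'(P)$ and likewise for unions, and restricting $\lambda$ to $\chi(L)\cup\chi(P)$ gives precisely the required map.

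Next I would treat the three generating moves separately. For a permutation of rows, the set of row-words is unchanged and $\Perm(T)$ is invariant (reindex the union over $S_n$ by the row permutation), so $\lambda=\mathrm{id}$ works. For a permutation $\sigma$ of columns I take $\lambda$ to be the coordinate permutation $u_1\cdots u_n\mapsto u_{\sigma(1)}\cdots u_{\sigma(n)}$; it sends each row-word of $T$ to the corresponding row-word of $T'$, and the substitution $\pi=\tau\sigma$ in the union defining the permanent shows $\lambda(\Perm(T))=\Perm(T')$. For the replacement of the entries of a column $j_0$ by their image under a bijection $\beta$ of $A$, I take $\lambda$ to apply $\beta$ in coordinate $j_0$ and leave the others fixed; it matches row-words to row-words and, coordinate by coordinate, permanent-words to permanent-words. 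Each of these three maps is a bijection of $A^n$.

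Since $\simc$ is generated by finite compositions of these moves, and a composite of bijections of $A^n$ that carry row-words to row-words and the permanent to the permanent is again such a bijection, any $T\simc T'$ yields a single bijection $\lambda$ with the two required properties, which completes the argument. I expect the column-permutation case to be the only real obstacle, as it is the one move that genuinely alters the words rather than merely their listing: the crux is the verification $\lambda(\Perm(T))=\Perm(T')$ through the reindexing $\pi=\tau\sigma$, together with keeping the convention consistent (whether column $j$ of $T'$ is column $\sigma(j)$ or $\sigma^{-1}(j)$ of $T$). A minor additional point worth flagging is that the paper's notion of isomorphic colorings is stated only as a bijection on $\chi(L)\cup\chi(P)$ together with a compatible restriction to $\chi(L)\cap\chi(P)$; the global bijection of $A^n$ constructed here restricts to exactly such a map, so the stated definition is met a fortiori.
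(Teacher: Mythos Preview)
Your proof is correct and follows essentially the same strategy as the paper's: verify invariance under each of the three generating moves of $\simc$ and conclude by composition. The paper's version is considerably terser---it dispatches the row and column permutation cases with a ``clearly'' and only spells out the column-bijection case---whereas you give the explicit bijection $\lambda$ of $A^n$ in each case and note that this global bijection restricts to the map required by the paper's definition; your treatment is thus a fleshed-out version of the same argument rather than a genuinely different route.
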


\begin{proof}
Let $T$ and $T'$ be two equivalent tableaux and let $H_T$ and $H_{T'}$ be their respective colored hypergraphs. Clearly, acting by permutation of rows and columns on~$T$ yields an isomorphic colored hypergraph. It remains to verify the case where~$T'$ is obtained from $T$ by a permutation of the alphabet on a certain column. In this case, the permutation induces the bijection $\lambda:\chi(L)\cup\chi(P) \longrightarrow\chi'(L)\cup\chi'(P)$ so that $\lambda$ is also a bijection on their intersection. Therefore, the new hypergraph $H_{T'}$ is isomorphic to $H_T$.
\end{proof}

\begin{remark*}
The converse of this statement is false. Consider the three non-equivalent tableaux on the alphabet $\{1,2,3\}$ which yield isomorphic hypergraphs:
{\footnotesize
\[
   \begin{array}{c}
          \begin{bmatrix}
                          1 & 1 & 1\\
                          1 & 2 & 2\\
                          1 & 3 & 3\\
                         \end{bmatrix}
   \end{array} ,
   \begin{array}{c}
         \begin{bmatrix}
                          1 & 1 & 1\\
                          1 & 2 & 2\\
                          2 & 3 & 3\\
                         \end{bmatrix}
   \end{array} , 
   \begin{array}{c}
        \begin{bmatrix}
                          1 & 1 & 1\\
                          2 & 2 & 2\\
                          3 & 3 & 3\\
                         \end{bmatrix}
   \end{array} .
\]}

\noindent Every tableau has $3$ distinct row blocks and $6$ distinct diagonal blocks. The bijection $\lambda$ sends corresponding blocks to each other. Since all tableaux are Cantorian,~$\lambda$ is trivially a bijection on $\chi(L)\cap\chi(P)=\varnothing$. Thus, the isomorphism classes of Cantorian hypergraphs define an equivalence  relation $\simh$ which is coarser than  $\simc$. We conjecture that $\simh$  is the coarsest equivalence relation on Cantorian tableaux which  could improve the computations. Furthermore, it is possible to extend the notion of Cantorian hypergraph by adding another family~$C$ \emph{orthogonal} to~$L$ to represent the columns of a tableau. This leads to similar notions for bi-Cantorian tableaux classes and isomorphism classes of bi-Cantorian hypergraphs. It would be interesting to study the isomorphism classes of bi-Cantorian hypergraphs in order to give a simpler description of bi-Cantorian classes.
\end{remark*}

%
\paragraph*{\emph{Acknowledgement}} \SB~ is supported by a research grant from NSERC Canada which partially contributed to \JPL's stay in Gradignan, where this paper was brought to its present form in the delightful estate of \MMF. We are grateful to the anonymous reviewers for their careful reading and for the comments provided.

\end{document}